\newtheorem{theorem}{Theorem}
\newtheorem{lemma}{Lemma}
\newtheorem{corollary}{Corollary}
\theoremstyle{definition}
\newtheorem{myexample}{Example}
\newcommand{\occ}{\mathit{occ}}
\newcommand{\MAW}{\mathsf{MAW}}
\newcommand{\typeone}{\mathcal{M}_1}
\newcommand{\typetwo}{\mathcal{M}_2}
\newcommand{\typethree}{\mathcal{M}_3}
\newcommand{\BSigma}{\Sigma_2}
\theoremstyle{definition}
\title{Combinatorics of minimal absent words \\ for a sliding window}
\author{Tooru~Akagi$^{1}$}
\author{Yuki~Kuhara$^{1}$}
\author{Takuya~Mieno$^{1,2}$}
\author{Yuto~Nakashima$^{1}$}
\author{Shunsuke~Inenaga$^{1,3}$}
\author{Hideo~Bannai$^4$}
\author{Masayuki~Takeda$^{1}$}
\affil{
  \normalsize{
  \textit{$^1$Department of Informatics, Kyushu University, Japan}\\
  \texttt{\{toru.akagi@inf.kyushu-u.ac.jp, takuya.mieno@inf.kyushu-u.ac.jp, yuto.nakashima, inenaga, takeda\}@inf.kyushu-u.ac.jp}\\
  \textit{$^2$Japan Society for the Promotion of Science, Japan}\\
  \textit{$^3$PRESTO, Japan Science and Technology Agency, Japan}\\
  \textit{$^4$M\&D Data Science Center, Tokyo Medical and Dental University, Japan}\\
  \texttt{hdbn.dsc@tmd.ac.jp}
  }
}
\date{}
\begin{document}
\maketitle

\begin{abstract}
  A string $w$ is called a minimal absent word~(MAW) for
  another string $T$
  if $w$ does not occur in $T$ but the proper substrings of $w$ occur in $T$.
  For example, let $\Sigma = \{\mathtt{a, b, c}\}$ be the alphabet.
  Then, the set of MAWs for string $w = \mathtt{abaab}$ is $\{\mathtt{aaa, aaba, bab, bb, c}\}$.
  In this paper,
  we study combinatorial properties of
  MAWs in the sliding window model, namely, 
  how the set of MAWs changes when a sliding window of fixed length $d$
  is shifted over the input string $T$ of length $n$,
  where $1 \leq d < n$.
  We present \emph{tight} upper and lower bounds on the maximum number of changes in the set of MAWs for a sliding window over $T$,
  both in the cases of general alphabets and binary alphabets.
  Our bounds improve on the previously known best bounds [Crochemore et al., 2020].
\end{abstract}

\section{Introduction}

We say that a string $s$ \emph{occurs} in another string $T$
if $s$ is a substring of $T$.
A non-empty string $w$ is said to be
a \emph{minimal absent word} (an \emph{MAW}) for
a string $T$ if $w$ does not occur in $T$
but all proper substrings of $w$ occurs in $T$.
Note that by definition
a string of length $1$ (namely a character)
which does not occur in $T$ is also an MAW for $T$.
On the other hand, any MAW for $T$ of length at least $2$
can be represented as $aub$,
where $a$ and $b$ are single characters
and $u$ is a (possibly empty) string,
such that both $au$ and $ub$ occur in $T$.
\sinote*{added}{%
  For example, let $\Sigma = \{\mathtt{a, b, c}\}$ be the alphabet.
  Then, the set of MAWs for string $w = \mathtt{abaab}$ is $\{\mathtt{aaa, aaba, bab, bb, c}\}$.
}%

\tanote*{3 citation added}{%
Applications of (minimal) absent words include
phylogeny~\cite{Chairungsee2012PhylogenyByMAW},
data compression~\cite{Crochemore2000DCA,crochemore2002improved},
musical information retrieval~\cite{CrawfordB018},
and bioinformatics~\cite{Almirantis2017MolecularBiology,Charalampopoulos18,pratas2020persistent,koulouras2021significant}.
}

\subsection{Algorithms for finding MAWs for string}

Given the above-mentioned motivations,
finding MAWs from a given string has been an important and interesting
string algorithmic problem and several nice solutions have been proposed.
The first non-trivial algorithm, which was given by
Crochemore et al.~\cite{Crochemore1998MAWdefinition},
finds the set $\MAW(T)$ of all MAWs for a given string $T$ of length $n$
over an alphabet of size $\sigma$
in $\Theta(\sigma n)$ time with $O(n)$ working space.
Since $|\MAW(T)| = O(\sigma n)$ for any string $T$ of length $n$ and
$|\MAW(S)| = \Omega(\sigma n)$ for some string $S$ of length $n$~\cite{Crochemore1998MAWdefinition},
Crochemore et al.'s algorithm~\cite{Crochemore1998MAWdefinition} runs in optimal time in the worst case.
Fujishige et al.~\cite{Fujishige2016DAWG}
improved Crochemore et al.'s algorithm so that
$\MAW(T)$ can be computed in output-sensitive $O(n+|\MAW(T)|)$ time
with $O(n)$ working space.
Both of these algorithms use the
\emph{directed acyclic word graph} (\emph{DAWG})~\cite{BlumerBHECS85} of string $T$ as a powerful tool for enumerating all MAWs for $T$.
Charalampopoulos et al.~\cite{charalampopoulos2018extended} presented an algorithm that computes all MAWs in output-sensitive time without using the DAWG of $T$.
Belazzougui et al.~\cite{Belazzougui2013ESA} showed that
$\MAW(T)$ can also be computed in $O(n+|\MAW(T)|)$ time,
provided that the \emph{bidirectional Burrows-Wheeler transform}
of a given string has already been computed.
Barton et al.~\cite{Barton2014MAWbySA} proposed a practical
algorithm to compute $\MAW(T)$ in $\Theta(n \sigma)$ time and working space\footnote{The original claimed bound in~\cite{Barton2014MAWbySA} is $O(n)$, however, the authors assumed that $\sigma = O(1)$.} based on the \emph{suffix array}~\cite{ManberM93} of $T$.
A parallel algorithm for computing MAWs has also been proposed~\cite{Barton2016ParallelComputationMAW}.
Fici and Gawrychowski~\cite{FiciG19} extended the notion
of MAWs to rooted/unrooted labeled trees and presented efficient algorithms
to compute them.

\subsection{MAWs for sliding window}
\label{sec:maw_slide_intro}
This paper follows the recent line of research on
\emph{MAWs for the sliding window model},
which was initiated by Crochemore et al.~\cite{CrochemoreHKMPR20}.
In this model, the goal is to compute or analyze $\MAW(T[i..i+d-1])$,
as $i$ is incremented, each time by 1, from 1 to $n-d+1$. For intuition, consider sliding a length-$d$ window on $T$ from left to right.

Crochemore et al.~\cite{CrochemoreHKMPR20} presented a
suffix-tree based algorithm that maintains the set of all MAWs
for a sliding window in $O(\sigma n)$ time using $O(\sigma d)$ working space.
Crochemore et al.~\cite{CrochemoreHKMPR20} also showed
how their algorithm can be applied to
approximate pattern matching under the \emph{length weighted index}
(\emph{LWI}) metric~\cite{Chairungsee2012PhylogenyByMAW}.

The (in)efficiency of their algorithms is heavily dependent on
combinatorial properties of MAWs for the sliding window.
In particular, Crochemore et al.~\cite{CrochemoreHKMPR20}
studied the number of MAWs to be added/deleted when the current window is shifted to the right by one character.
As was done in~\cite{CrochemoreHKMPR20},
for ease of discussion let us separately consider
\begin{itemize}
  \item adding a new character $T[i+d]$ to the current window $T[i..i+d-1]$ of length $d$ which forms $T[i..i+d]$, and
  \item deleting the leftmost character $T[i-1]$
        from the current window $T[i-1..i+d-1]$ which forms $T[i..i+d-1]$ of length $d$.
\end{itemize}
We remark that these two operations are symmetric.

Crochemore et al.~\cite{CrochemoreHKMPR20}
considered how many MAWs can change before and after
the window has been shifted by one position,
and showed that
\begin{eqnarray*}
  |\MAW(T[i..i+d]) \bigtriangleup \MAW(T[i..i+d-1])| \leq (s_{i}-s_{\alpha})(\sigma -1) + \sigma + 1, \\
  |\MAW(T[i-1..i+d-1]) \bigtriangleup \MAW(T[i..i+d-1])| \leq (p_{i}-p_{\beta})(\sigma - 1) + \sigma + 1,
\end{eqnarray*}
where $\bigtriangleup$ denotes the symmetric difference and
\begin{itemize}
  \item $s_{i}$ is the length of the longest repeating suffix of $T[i..i+d-1]$,
  \item $s_{\alpha}$ is the length of the longest suffix of $T[i..i+d-1]$ having an internal occurrence immediately followed by $\alpha = T[i+d]$,
  \item $p_{i}$ is that of the longest repeating prefix of $T[i..i+d-1]$, and
  \item $p_{\beta}$ is the length of the longest prefix of $T[i..i+d-1]$ having an internal occurrence immediately preceded by $\beta = T[i-1]$.
\end{itemize}
Since both $s_{i}-s_{\alpha}$ and $p_{i}-p_{\beta}$ can be at most $d-1$ in the worst case, the asymptotic bounds for the numbers of changes
in the set of MAWs obtained by Crochemore et al.~\cite{CrochemoreHKMPR20} are:
\begin{equation}
  \begin{array}{rcl}
    |\MAW(T[i..i+d]) \bigtriangleup \MAW(T[i..i+d-1])|     & \in & O(\sigma d), \\
    |\MAW(T[i-1..i+d-1]) \bigtriangleup \MAW(T[i..i+d-1])| & \in & O(\sigma d).
  \end{array}
  \label{eqn:Crochemore_bound}
\end{equation}

Crochemore et al.~\cite{CrochemoreHKMPR20} also considered
the \emph{total changes} in the set of MAWs for every sliding window
over the string $T$, and showed that
\begin{equation}
  \sum_{i=1}^{n-d} \big( |\MAW(T[i..i+d-1]) \bigtriangleup \MAW(T[i+1..i+d])| \big)\in O(\sigma n). \label{eqn:total_Crochremore}
\end{equation}

\subsection{Our contribution}
The goal of this paper is to give more rigorous analyses
on the number of MAWs for the sliding window model.
This study is well motivated since
revealing more combinatorial insights to the sets of MAWs for the sliding windows
can lead to more efficient algorithms for computing them.

In this paper, we first give the following upper bounds:
\begin{equation}
  \begin{array}{rcl}
    |\MAW(T[i..i+d]) \bigtriangleup \MAW(T[i..i+d-1])|     & \leq & d + \sigma_{i,i+d-1} + 1, \\
    |\MAW(T[i-1..i+d-1]) \bigtriangleup \MAW(T[i..i+d-1])| & \leq & d + \sigma_{i,i+d-1} + 1,
  \end{array}
  \label{eqn:tight_bounds}
\end{equation}
where $\sigma_{x,y}$ is the number of distinct characters in $T[x,y]$.
We then show that our new upper bounds in (\ref{eqn:tight_bounds}) are \emph{tight}
by showing a family of strings achieving these bounds.

Since $\sigma_{i,i+d-1} \leq d$ always holds,
we immediately obtain new asymptotic upper bounds
\begin{equation}
  \begin{array}{rcl}
    |\MAW(T[i..i+d]) \bigtriangleup \MAW(T[i..i+d-1])|     & \in & O(d), \\
    |\MAW(T[i-1..i+d-1]) \bigtriangleup \MAW(T[i..i+d-1])| & \in & O(d).
  \end{array}
  \label{eqn:new_asymptotic_bounds}
\end{equation}
Our new upper bounds in (\ref{eqn:new_asymptotic_bounds}) improve Crochemore et al.'s upper
bounds in (\ref{eqn:Crochemore_bound})
for any alphabet of size $\sigma \in \omega(1)$.
Our upper bounds in (\ref{eqn:new_asymptotic_bounds})
are also \emph{tight} as there exists a family of strings achieving the matching lower bounds $\Omega(d)$.

In this paper, we also present
a new upper bound for the total changes of MAWs:
\begin{equation}
  \sum_{i=1}^{n-d} \big( |\MAW(T[i..i+d-1]) \bigtriangleup \MAW(T[i+1..i+d])| \big)\in O(\min\{\sigma, d\} n)
  \label{eqn:total_new}
\end{equation}
which improves the previous bound $O(\sigma n)$ in (\ref{eqn:total_Crochremore}).
We then show that this new upper bound
in (\ref{eqn:total_new}) is also \emph{tight}.

All of our new bounds aforementioned
are tight for any alphabet of size $\sigma_{i,i+d-1} \geq 3$.
We further explore the case of binary alphabets with $\sigma_{i,i+d-1} = 2$,
and show that there exist even tighter bounds in the binary case.
Namely, for $\sigma_{i,i+d-1} = 2$, we prove that
\begin{equation}
  \begin{array}{rcl}
    |\MAW(T[i..i+d]) \bigtriangleup \MAW(T[i..i+d-1])|     & \leq & \max\{3, d\}, \\
    |\MAW(T[i-1..i+d-1]) \bigtriangleup \MAW(T[i..i+d-1])| & \leq & \max\{3, d\}.
  \end{array}
  \label{eqn:binary_bounds}
\end{equation}
We remark that plugging $\sigma_{i,i+d-1} = 2$ into
(\ref{eqn:tight_bounds}) for the general case
only gives $d + \sigma_{i,i+d-1} + 1 = d + 3$,
which is larger than $\max\{3, d\}$ in (\ref{eqn:binary_bounds}).
We consider the case $\sigma_{i,i+d-1} \geq  d$ in Lemmas~\ref{lem:case_d=1} and \ref{lem:case_d=2}.
We also show that the upper bounds $\max\{3, d\}$ in (\ref{eqn:binary_bounds})
are \emph{tight} by giving matching lower bounds with a family of binary strings.
 
A part of the results reported in this article appeared in
a preliminary version of this paper~\cite{MienoKAFNIBT20}.
In addition, this present article considers the case of binary alphabets
and presents tight upper and lower bounds for this case.

\section{Preliminaries}

\subsection{Strings}
Let $\Sigma$ be an alphabet.
An element of $\Sigma$ is called a character.
An element of $\Sigma^\ast$ is called a string.
The length of a string $T$ is denoted by $|T|$.
The empty string $\varepsilon$ is the string of length 0.
If $T = xyz$, then $x$, $y$, and $z$ are called
a \emph{prefix}, \emph{substring}, and \emph{suffix} of $T$, respectively.
They are called a \emph{proper prefix}, \emph{proper substring},
and \emph{proper suffix} of $T$ if $x \neq T$, $y \neq T$, and $z \neq T$,
respectively.

For any $1 \le i \le |T|$, the $i$-th character of $T$ is denoted by $T[i]$.
For any $1 \le i \le j \le |T|$, $T[i..j]$ denotes
the substring of $T$ starting at $i$ and ending at $j$.
For convenience, let $T[i..j] = \varepsilon$ for $0 \leq j < i \leq |T|+1$.
For any $i \le |T|$ and $1 \le j$, let $T[..i] = T[1..i]$ and $T[j..] = T[j..|T|]$.
%

We say that a string $w$ \emph{occurs} in a string $T$
if $w$ is a substring of $T$.
Note that by definition the empty string $\varepsilon$
is a substring of any string $T$
and hence $\varepsilon$ always occurs in $T$.

\subsection{Minimal absent words (MAWs)}

A string $w$ is called an \emph{absent word} for a string $T$
if $w$ does not occur in $S$.
An absent word $w$ for $S$ is called a \emph{minimal absent word}
or \emph{MAW} for $S$ if
any proper substring of $w$ occurs in $S$.
We denote by $\MAW(S)$ the set of all MAWs for $S$.
By the definition of MAWs,
it is clear that $w \in \MAW(S)$ iff the three following conditions hold:
\begin{enumerate}
  \item[(A)] $w$ does not occur in $S$;
  \item[(B)] $w[2..]$ occurs in $S$;
  \item[(C)] $w[..|w|-1]$ occurs in $S$.
\end{enumerate}
We note that if $w$ is a string of length $1$
which does not occur in $S$
(i.e. $w$ is a single character in 
\tanote*{add the number of alphabet used in T}{
$\Sigma$ of size $\sigma$}
 not occurring in $S$),
then $w$ is a MAW for $S$ since $w[2..] = w[..|w|-1] = \varepsilon$
is a substring of $S$.

%
%

\tanote*{example added}{%
 \begin{myexample}
  Let $\Sigma = \{ \mathtt{a,b,c,d} \}. $ Then, the set of MAWs for string $\mathtt{cbaaaa}$ is:
  \[
\MAW(\mathtt{cbaaaa}) = \{\mathtt{cc,bb,aaaaa,bc,ab,ca,ac,d}\}.
  \]
  \end{myexample}

}%

\subsection{MAWs for a sliding window}

  Given a string $T$ of length $n$ and
  a sliding window $S_i = T[i..j]$ of length $d = j-i+1$
  for increasing $i = 1, \ldots, n-d+1$,
  our goal is to analyze how many MAWs for the sliding window
  can change when the window shifts over the string $T$.
  We will consider both the maximum change per one shift,
  and the maximum total number of changes when sliding the window from the beginning to the end.

  As was done in~\cite{CrochemoreHKMPR20},
  for simplicity, we separately consider two symmetric operations
  of appending a new character to the right of the window and
  of deleting the leftmost character from the window.

  \begin{myexample}
  Let $\Sigma = \{ \mathtt{a,b,c,d} \}$. Consider appending character $\mathtt{c}$ to the right of string $\mathtt{cbaaaa}$.
  Then,
  \[
    \begin{array}{lcl}
      \MAW(\mathtt{cbaaaa})  & = & \{\mathtt{cc,bb,aaaaa,bc,ab,ca,ac,d}\},                    \\
      \MAW(\mathtt{cbaaaac}) & = & \{\mathtt{cc,bb,aaaaa,bc,ab,ca,   acb,bac,baac,baaac,d}\}.
    \end{array}
  \]
  Thus
  $\MAW(\mathtt{cbaaaa}) \bigtriangleup \MAW(\mathtt{cbaaaac}) = \{\mathtt{\underline{ac}, acb,bac,baac,baaac}\}$,
  where the underlined string is deleted from
  and the strings without underlines are added to
  the set of MAWs by appending $\mathtt{c}$ to $\mathtt{cbaaaa}$.
  \end{myexample}

%

\section{Tight bounds on the changes to MAWs for sliding window} \label{sec:comb_MAWs}

In this section,
we present our new bounds for the
changes of MAWs for the sliding window over the string $T$.
In Section~\ref{sec:append},
we consider the number of changes of MAWs when
the current window $T[i..j]$ is extended by adding a new character $T[j+1]$.
Section~\ref{sec:remove} is for the symmetric case
where the leftmost character $T[i]$ is deleted from $T[i..i+j+1]$.
Finally, in Section~\ref{sec:total},
we consider the total number of changes of MAWs
while the window has been shifted from the beginning of $T$ until its end.

\subsection{Changes to MAWs when a character is appended to the right}
\label{sec:append}
We consider the number of changes of MAWs when appending
$T[j+1]$ to the current window $T[i..j]$.

For the number of deleted MAWs, the next lemma is known:
\begin{lemma}[\cite{CrochemoreHKMPR20}] \label{lem:extention_dec}
  For any $1 \le i \le j < n$,
  $|\MAW(T[i..j])\setminus \MAW(T[i..j+1])| = 1$.
\end{lemma}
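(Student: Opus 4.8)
The plan is to characterize exactly which MAWs of $T[i..j]$ fail to remain MAWs of $T[i..j+1]$, and show there is precisely one. Write $S = T[i..j]$, $S' = T[i..j+1]$, and let $\alpha = T[j+1]$ be the appended character. First I would observe that appending $\alpha$ only \emph{adds} occurrences: every substring of $S$ still occurs in $S'$, and the newly-occurring substrings are exactly the suffixes of $S'$ that are not suffixes of $S$ — equivalently, the strings of the form $v\alpha$ where $v$ is a suffix of $S$ that does not already occur immediately followed by $\alpha$ inside $S$. A string $w \in \MAW(S)$ is removed from the MAW set precisely when it stops being a MAW for $S'$; since conditions (B) and (C) (that $w[2..]$ and $w[..|w|-1]$ occur) are \emph{preserved} under adding occurrences, the only way to lose MAW-status is for condition (A) to fail, i.e.\ for $w$ itself to start occurring in $S'$. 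Hence the removed MAWs are exactly those $w \in \MAW(S)$ that are suffixes of $S'$.

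The key step is to show that at most one — and in fact exactly one — element of $\MAW(S)$ is a suffix of $S'$. Suppose $w = w[..|w|-1]\,\alpha$ is such a MAW (note its last character must be $\alpha$ since it is a suffix of $S'$ but does not occur in $S$). Writing $w = a u \alpha$ with $a$ a single character and $u$ a possibly empty string, the fact that $w$ is a suffix of $S'$ forces $u\alpha$ to be a suffix of $S'$, hence $u$ is a suffix of $S$; and condition (C) says $au$ occurs in $S$. I would argue that $u$ is forced to be the \emph{longest repeating suffix} $\LSuf(S)$ of $S$: if $u$ were a proper suffix of $\LSuf(S)$, then $\LSuf(S)$ being a repeating suffix would give an internal occurrence of $u$ inside $S$ whose following character, call it $c$, together with condition (A)/(B) analysis would make $uc$ (hence a shorter candidate) occur — a more careful version of this shows $au\alpha$ would then already occur in $S'$ via that internal occurrence, contradicting (A); and if $u$ were strictly longer than $\LSuf(S)$, then $u$ occurs only as a suffix of $S$, so $au$ — which occurs in $S$ by (C) — would have to be a suffix of $S$, forcing $a$ to be the character preceding $\LSuf(S)$'s suffix occurrence, but then $au = $ (a suffix of $S$ of length $|u|+1$) and appending $\alpha$ shows $w = au\alpha$ is a suffix of $S'$ that was already predicted; one then checks $u > \LSuf(S)$ makes $u$ non-repeating so $u\alpha$ can only occur at the end of $S'$, which is consistent, but $au$ being a suffix of $S$ forces the length, pinning $w$ down uniquely. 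The cleanest formulation: the unique removed MAW is $a\cdot\LSuf(S)\cdot\alpha$ where $a$ is the single character such that $a\cdot\LSuf(S)$ is a suffix of $S$ (which exists and is unique, since $\LSuf(S)$ occurs at least twice in $S$, once as a suffix, and the occurrence as a suffix is preceded by a unique character), provided $\LSuf(S)\cdot\alpha$ does not occur in $S$ — and I would verify that this last condition always holds here because otherwise no suffix of $S$ ending the run of new occurrences would be "new", contradicting that $\alpha$ genuinely extends something. Existence of at least one removed MAW follows by exhibiting this very string and checking (A), (B), (C) directly: it does not occur in $S'$ — wait, it must not occur, so actually I must re-examine; the correct statement is that exactly one MAW is \emph{lost}, and I would pin it to $a\cdot v\cdot\alpha$ for the appropriate proper suffix $v$, doing the existence and uniqueness bookkeeping via the suffix structure of $S$.

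The main obstacle I anticipate is the existence direction and getting the exact identity of the lost MAW right: one must show that $S'$ really does absorb exactly one old MAW, neither zero nor two, which requires a tight argument about how the set of suffixes of $S'$ that are absent from $S$ forms a single "chain" $v_1\alpha, v_2\alpha, \ldots$ ordered by length, and that exactly the shortest superstring-of-$\LSuf(S)$ member of this chain, extended on the left by one character, is a MAW for $S$ while all longer ones fail condition (C) and all shorter ones fail condition (A) for $S$. Since the lemma is quoted from \cite{CrochemoreHKMPR20}, I would mirror their suffix-tree/suffix-link argument: the lost MAW corresponds to the single edge of the suffix tree of $S$ that gets "split" or whose implicit locus changes when $\alpha$ is appended, which by the structure of Ukkonen-type updates is exactly one. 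Because this is a known result, a short self-contained proof along the lines above suffices, with the uniqueness half being routine once the characterization "removed $\iff$ MAW of $S$ that is a suffix of $S'$" is established, and the existence half handled by the explicit construction $a\,\LSuf(S)\,\alpha$.
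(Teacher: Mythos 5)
The paper states this lemma without proof (it is quoted from Crochemore et al.), so I am judging your argument on its own merits. Your opening reduction is correct and is the right way to start: appending $\alpha$ only adds occurrences, so conditions (B) and (C) are preserved, the new occurrences are exactly suffixes of $S\alpha$, and hence the removed MAWs are precisely the elements of $\MAW(S)$ that are suffixes of $S\alpha$. The gap is in the second half, where you try to pin down the unique such element as $a\cdot\LSuf(S)\cdot\alpha$ built from the longest \emph{repeating} suffix of $S$. That identification is wrong, and your own hedging (``wait, it must not occur, so actually I must re-examine'') signals that neither existence nor uniqueness is actually established. Two concrete failures: (i) if $\alpha$ does not occur in $S$ at all, the removed MAW is the single character $\alpha$, which your formula cannot produce; (ii) for $S=\mathtt{aba}$ and $\alpha=\mathtt{a}$ we have $\LSuf(S)=\mathtt{a}$ and your formula yields $\mathtt{baa}$, which is not even a MAW of $S$ (its suffix $\mathtt{aa}$ does not occur in $S$, violating condition (B)); the MAW actually removed is $\mathtt{aa}$.

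The fix is to parametrize by the right quantity. A removed MAW has the form $v\alpha$ with $v$ a suffix of $S$; being a MAW of $S$ that newly occurs in $S\alpha$ amounts to exactly two conditions: $v\alpha$ does not occur in $S$, and $v[2..]\,\alpha$ does occur in $S$ (condition (C) is automatic since $v$ is a suffix of $S$). Because ``$v\alpha$ occurs in $S$'' is inherited by every shorter suffix of $v$, the suffixes of $S$ split at a single length threshold: letting $s_\alpha$ denote the length of the longest suffix of $S$ having an occurrence in $S$ immediately followed by $\alpha$ (the quantity the paper itself uses in Section~1.2, not $|\LSuf(S)|$), the two conditions force $|v| = s_\alpha + 1$. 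Such a $v$ always exists because $S\alpha$ itself cannot occur in $S$, and it is unique because a suffix of $S$ is determined by its length; one then verifies directly that this $v\alpha$ satisfies (A), (B), (C) for $S$. This yields exactly one removed MAW, which is the statement your sketch was aiming for but does not reach as written.
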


Next, we consider the number of added MAWs.
We classify the MAWs in $\MAW(T[i..j+1]) \setminus \MAW(T[i..j])$
to the following three types\footnote{At least one of $w[2..]$ and $w[..|w|-1]$ does not occur in $T[i..j]$,
  since $w \not\in \MAW(T[i..j])$.}~(see Figure~\ref{fig:MAW_3types}).
A MAW $w$ in $\MAW(T[i..j+1]) \setminus \MAW(T[i..j])$ is said to be of:
\begin{description}
  \item Type 1 if
    neither $w[2..]$ nor $w[..|w|-1]$ occurs in $T[i..j]$;
  \item Type 2 if
    $w[2..]$ occurs in $T[i..j]$ but $w[..|w|-1]$ does not occur in $T[i..j]$;
  \item Type 3 if
    $w[2..]$ does not occur in $T[i..j]$ but $w[..|w|-1]$ occurs in $T[i..j]$.
\end{description}

\begin{figure}[tb]
    \centerline{\includegraphics[width=1.0\linewidth]{./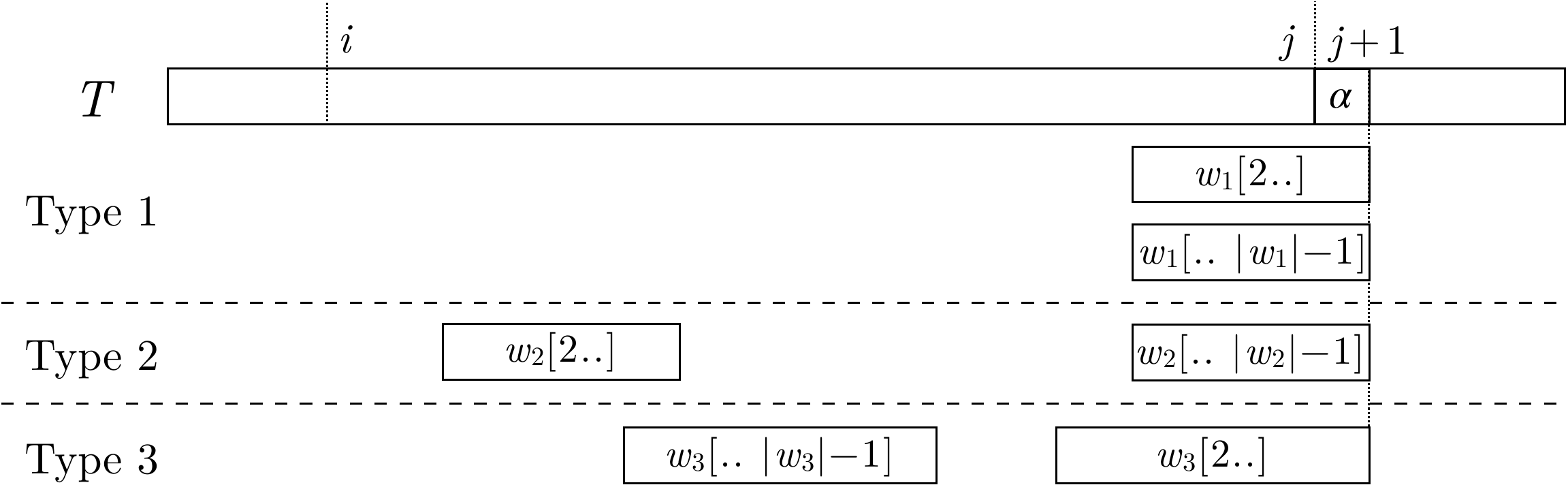}}
    \caption{
      Illustration for the three types of MAWs, where
      $w_1 \in \typeone$, $w_2 \in \typetwo$, and $w_3 \in \typethree$.
    }
    \label{fig:MAW_3types}
\end{figure}

We denote by $\typeone$, $\typetwo$, and $\typethree$ the sets of MAWs of Type 1, Type 2 and Type 3, respectively.
Recall that $w$ is a MAW for $T[i..j+1]$.


Let $\sigma_{i,j}$ be the number of distinct characters occurring in
the current window $T[i..j]$.

\tanote*{moved from just before lemma 4 to here}{%
  The next three lemmas show the upper bounds of $\typeone$, $\typetwo$, and $\typethree$:
  }
  
  \begin{lemma}[\cite{CrochemoreHKMPR20}]
  \label{type1max}
  For any $1 \le i \le j < n$,
  $|\typeone| \le 1$.
  Also, if $\alpha$ is the character appended to $T[i..j]$,
  then the only element of $\typeone$ is of the form $\alpha^k$ for some $k \geq 1$.
  \end{lemma}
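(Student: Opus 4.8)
\textbf{Proof proposal for Lemma~\ref{type1max}.}

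The plan is to analyze which Type~1 MAWs can possibly exist after appending the character $\alpha = T[j+1]$ to the window $T[i..j]$. Let $w \in \typeone$, so $w \in \MAW(T[i..j+1])$ and neither $w[2..]$ nor $w[..|w|-1]$ occurs in $T[i..j]$. First I would observe that $|w| \geq 2$: a length-$1$ MAW $w$ always has $w[2..] = w[..|w|-1] = \varepsilon$, which occurs in $T[i..j]$, so such a $w$ cannot be of Type~1. Hence we may write $w = a u b$ with $a, b \in \Sigma$ and $u$ a (possibly empty) string, where by condition (B) applied to $T[i..j+1]$ the string $w[2..] = ub$ occurs in $T[i..j+1]$, and by condition (C) the string $w[..|w|-1] = au$ occurs in $T[i..j+1]$.

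The key step is to locate these occurrences. Since $ub$ occurs in $T[i..j+1]$ but, being $w[2..]$, does not occur in $T[i..j]$, every occurrence of $ub$ in $T[i..j+1]$ must use the newly appended last position $j+1$; therefore $ub$ is a suffix of $T[i..j+1]$, so in particular $b = \alpha$. Symmetrically, $au = w[..|w|-1]$ occurs in $T[i..j+1]$ but not in $T[i..j]$, so every occurrence of $au$ in $T[i..j+1]$ also uses position $j+1$; hence $au$ is a suffix of $T[i..j+1]$, which forces $u$ to end with $\alpha$ if $u \neq \varepsilon$, and in all cases forces the last character of $au$ to be $\alpha$. Now I would push this further: $au$ is a suffix of $T[i..j+1]$ and $ub$ is a suffix of $T[i..j+1]$ with $b = \alpha$, and these two suffixes have lengths $|w|-1$ that differ from $|w|$ by one; comparing $au$ (a suffix of length $|w|-1$) with the length-$(|w|-1)$ suffix of $ub$, namely the suffix of $ub$ obtained by dropping its first character, and using that both are suffixes of the same string, one deduces that $w = aub$ consists entirely of the character $\alpha$, i.e. $a = \alpha$, every character of $u$ equals $\alpha$, and $b = \alpha$. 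Thus $w = \alpha^k$ for some $k \geq 1$, which is the claimed form.

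Finally, for the cardinality bound $|\typeone| \leq 1$, I would argue that at most one power $\alpha^k$ can be a MAW for $T[i..j+1]$. Let $\ell$ be the length of the longest run of $\alpha$'s that is a substring of $T[i..j+1]$ (equivalently the longest power of $\alpha$ occurring in $T[i..j+1]$); note $\ell \geq 1$ since $\alpha = T[j+1]$ occurs. Then $\alpha^m$ occurs in $T[i..j+1]$ exactly for $m \leq \ell$, so the only candidate absent power is $\alpha^{\ell+1}$, and it is indeed a MAW precisely when its two proper maximal substrings $\alpha^{\ell}$ (as both prefix and suffix) occur — which they do. Any shorter power $\alpha^m$ with $m \leq \ell$ is not absent, and any longer power $\alpha^{m}$ with $m > \ell+1$ has the proper substring $\alpha^{\ell+1}$ which is absent, so it is not a MAW. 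Hence at most one element of $\typeone$ exists.

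The main obstacle I anticipate is the middle step: carefully showing that the two overlapping suffix conditions ($au$ a suffix and $u\alpha$ a suffix of the same string $T[i..j+1]$) force $w$ to be a unary string. This is a short periodicity-style argument — if $au$ and $ub$ are both suffixes of $T[i..j+1]$ with $|au| = |ub| = |w|-1$, then $au = ub$ as strings, giving $a u' = u' b$ where... — but stating it cleanly without an off-by-one slip, and handling the $u = \varepsilon$ base case separately (where $w = a\alpha$ with $a = \alpha$ forced directly), is where the care is needed. Everything else is a routine unwinding of conditions (A), (B), (C).
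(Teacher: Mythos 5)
Your proof is correct. Note that the paper itself gives no proof of this lemma --- it is quoted from Crochemore et al.\ \cite{CrochemoreHKMPR20} --- so there is nothing internal to compare against; your argument is the standard self-contained one: both $au$ and $ub$ must be suffixes of $T[i..j+1]$ of the same length $|w|-1$, hence $au=ub$, which forces $w$ to be unary in $\alpha$, and then the longest run of $\alpha$ in $T[i..j+1]$ pins down the unique candidate $\alpha^{\ell+1}$. One small slip worth fixing: in your middle paragraph you describe comparing $au$ with ``the length-$(|w|-1)$ suffix of $ub$ obtained by dropping its first character,'' which actually has length $|w|-2$; the clean statement, which you do give correctly in your final paragraph, is simply that two equal-length suffixes of the same string coincide, so $au=ub$ and the usual cascade $a=u_1=\cdots=u_{|u|}=b$ follows.
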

  
  \begin{lemma}
  \label{type2max}
  For any $1 \le i \le j < n$,
  $|\typetwo| \le \sigma_{i,j}$.
  \end{lemma}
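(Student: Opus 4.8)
The plan is to analyze the structure of a Type~2 MAW $w = aub$ for $T[i..j+1]$ (with $a,b \in \Sigma$ and $u$ possibly empty) and extract enough information about its occurrence behavior to charge it to a distinct character of $T[i..j]$. Let $\alpha = T[j+1]$ be the appended character. By definition of Type~2, the string $w[2..] = ub$ occurs in $T[i..j]$, but $w[..|w|-1] = au$ does not occur in $T[i..j]$; since $w$ is a MAW for $T[i..j+1]$, both $au$ and $ub$ do occur in $T[i..j+1]$. So $au$ occurs in $T[i..j+1]$ but not in $T[i..j]$, which forces every occurrence of $au$ in $T[i..j+1]$ to be a suffix of $T[i..j+1]$, hence $au$ is a suffix of $T[i..j+1]$ and in particular the last character of $au$ is $\alpha$. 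If $u = \varepsilon$ this would make $a = \alpha$ and $w = \alpha b$; if $u \neq \varepsilon$ then $u$ ends with $\alpha$, so $u = u'\alpha$ and $w = a u' \alpha b$ where $au'\alpha$ is a suffix of $T[i..j+1]$, i.e. $au'$ is a suffix of $T[i..j]$.

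First I would set up the charging map. For each $w \in \typetwo$, I claim $w$ is determined by (and hence can be charged to) its first character $a = w[1]$. The key step is to show the first character is injective on $\typetwo$: suppose $w = aub$ and $w' = au'b'$ are two Type~2 MAWs sharing the same first character $a$. From the structural analysis above, $au$ is a suffix of $T[i..j+1]$ ending in $\alpha$, and likewise $au'$ is a suffix of $T[i..j+1]$ ending in $\alpha$; two suffixes of the same string that begin with the same character must be comparable, and together with the fact that $au$ and $au'$ are the (respective) shortest suffixes of $T[i..j+1]$ that begin with $a$ and fail to occur in $T[i..j]$ — because $w,w'$ are MAWs, $u$ and $u'$ are as short as possible in the sense that $au[2..|au|-1]$-type prefixes still occur — one argues $au = au'$, hence $u = u'$. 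Then $b = b'$ follows because $ub = w[2..]$ must be the longest extension of $u$ that still occurs in $T[i..j]$ is not quite it; rather, $b$ is forced as the unique character with $ub$ occurring in $T[i..j]$ while $aub$ does not occur in $T[i..j+1]$, and since $au = au'$ is a suffix of $T[i..j+1]$ followed by nothing, the absence of $aub$ in $T[i..j+1]$ together with $w$ being a MAW pins down $b$ uniquely. Therefore $w \mapsto w[1]$ is injective, giving $|\typetwo| \le \sigma_{i,j}$, since each such first character $a$ must occur in $T[i..j]$ (as $a$ is the first character of $au$, and $au$'s proper prefixes occur in $T[i..j]$ when $|au| \ge 2$; the boundary case $au = a = \alpha$ contributes the single Type-2 MAW of the form $\alpha b$, and $\alpha$ occurs in $T[i..j+1]$ — one must double check $\alpha$ occurs in $T[i..j]$ as well, which holds unless $\alpha$ is brand new, in which case $w = \alpha b$ would have $w[..|w|-1] = \alpha$ not in $T[i..j]$, consistent, but then $w$ is actually charged differently; I would handle this edge case by noting that if $\alpha \notin T[i..j]$ then the only absent-word activity involving $\alpha$ as a prefix is already captured and the count still does not exceed $\sigma_{i,j}$).

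The main obstacle I anticipate is the injectivity argument for the map $w \mapsto w[1]$: making rigorous the claim that, among suffixes of $T[i..j+1]$ beginning with a fixed character $a$, the MAW condition singles out exactly one candidate $au$ (the shortest one whose full length makes it newly-occurring), and then that the trailing character $b$ is likewise forced. This requires carefully using conditions (A)--(C) from the definition of MAWs together with the fact that $w \notin \MAW(T[i..j])$ precisely because $au$ fails to occur in $T[i..j]$ — so $au$ is "just barely" absent from $T[i..j]$ — plus the observation that all occurrences of a newly-appearing substring in $T[i..j+1]$ are suffixes. I would isolate this as a short structural sub-claim: if $aub$ is a Type~2 MAW then $au$ is a suffix of $T[i..j+1]$ and $u$ is the longest string such that $au$ is a suffix of $T[i..j+1]$ and $aub$ does not occur in $T[i..j+1]$; once that sub-claim is in place, injectivity on first characters and hence the bound $|\typetwo| \le \sigma_{i,j}$ follow cleanly.
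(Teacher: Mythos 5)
Your structural setup is correct: for a Type-2 MAW $w = aub$, the prefix $au$ occurs in $T[i..j+1]$ but not in $T[i..j]$, so every occurrence of $au$ is a suffix of $T[i..j+1]$ and $au$ ends with $\alpha = T[j+1]$. But the charging map you build on top of this is wrong: the map $w \mapsto w[1]$ is \emph{not} injective on $\typetwo$, and the paper's own example refutes it. For window $\mathtt{abcddd}$ and appended character $\alpha = \mathtt{e}$, we have $\typetwo = \{\mathtt{ea}, \mathtt{eb}, \mathtt{ec}, \mathtt{ed}\}$: four Type-2 MAWs, all with first character $\mathtt{e}$. This happens precisely because $au$ is a suffix of $T[i..j+1]$; when $u = \varepsilon$ the first character $a$ equals $\alpha$ for \emph{every} such MAW, and the final character $b$ is not ``pinned down uniquely'' as you claim --- any $b$ such that $ub$ occurs in the window and $aub$ is absent from the extended window works, and there can be up to $\sigma_{i,j}$ of them. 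Note also that in this example the shared first character $\mathtt{e}$ does not even occur in $T[i..j]$, so the codomain of your map is off as well; the edge case you defer to the end of your argument is in fact the generic case.

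The correct injection goes in the opposite direction: charge $w = aub$ to its \emph{last} character $b$. Since $ub = w[2..]$ occurs in $T[i..j]$, the character $b$ occurs in $T[i..j]$, so the codomain is the set of $\sigma_{i,j}$ characters of the current window. For injectivity, suppose $w_1 = a_1u_1b$ and $w_2 = a_2u_2b$ are distinct Type-2 MAWs with the same last character. Then $a_1u_1$ and $a_2u_2$ are both suffixes of $T[i..j+1]$, hence comparable; if, say, $a_2u_2$ is a proper suffix of $a_1u_1$, then $w_2 = a_2u_2b$ is a proper suffix, hence a proper substring, of the MAW $w_1$, so $w_2$ occurs in $T[i..j+1]$ --- contradicting that $w_2$ is absent. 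This is exactly the distinct-last-characters fact that the paper cites from Crochemore et al., and it yields $|\typetwo| \le \sigma_{i,j}$ immediately.
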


  \begin{proof}
    It is shown in \cite{CrochemoreHKMPR20} that
    the last characters of all MAWs in $\typetwo$ are all distinct.
    Furthermore, by the definition of $\typetwo$,
    the last character $T[j+1]$ of each MAW in $\typetwo$ must occur in the current window $T[i..j]$.
    Thus, $|\typetwo| \le \sigma_{i,j}$.
  \end{proof}
  
  \begin{lemma}
  \label{type3max}
  For any $1 \le i \le j < n$,
  $|\typethree| \le d-1$, where
  $d = j - i + 1$.
  \end{lemma}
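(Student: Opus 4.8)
The plan is to inject every MAW $w$ of Type 3 into the set of suffixes of the current window $T[i..j]$ via its prefix $w[..|w|-1]$, and then argue that only a bounded number of such suffixes can give rise to a Type-3 MAW. Recall that for $w \in \typethree$ we have: $w$ does not occur in $T[i..j+1]$, $w[..|w|-1]$ occurs in $T[i..j]$, but $w[2..]$ does not occur in $T[i..j]$. The key observation is that $w[2..]$ must occur in $T[i..j+1]$ (since $w$ is a MAW for $T[i..j+1]$), yet does not occur in $T[i..j]$; hence every occurrence of $w[2..]$ in $T[i..j+1]$ ends at position $j+1$, i.e.\ $w[2..]$ is a suffix of $T[i..j+1]$. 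Writing $w = bw'c$ where $b = w[1]$, this says $w'c = w[2..]$ is a suffix of $T[i..j+1]$, so $c = T[j+1] = \alpha$ and $w' = w[2..|w|-1]$ is a suffix of $T[i..j]$. Also $w[..|w|-1] = bw'$ occurs in $T[i..j]$ (and in fact in $T[i..j+1]$, since appending $\alpha$ cannot create the occurrence because $w[..|w|-1]$ does not end with $\alpha$ at position $j+1$ — or if it does, handle that boundary case separately).

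Next I would set up an injection from $\typethree$ into a family of suffixes of $T[i..j]$ of length between $1$ and $d-1$. Concretely, map $w = bw'\alpha \in \typethree$ to the suffix $w' = w[2..|w|-1]$ of $T[i..j]$. I claim this map is injective: suppose $w_1 = b_1 w' \alpha$ and $w_2 = b_2 w' \alpha$ are two Type-3 MAWs with the same middle $w'$. Then $b_1 w'$ and $b_2 w'$ both occur in $T[i..j]$ as they are the length-$(|w|-1)$ prefixes of Type-3 MAWs. If $b_1 \ne b_2$, then $w'$ is a left-extendable suffix of $T[i..j]$ preceded by two distinct characters; but then $w' \alpha = w_1[2..]$ would occur in $T[i..j]$ — indeed, $w'$ has an internal occurrence in $T[i..j]$ (one of its two left-extensions is not the leftmost occurrence), and... actually the cleaner route is: since $b_1 w'$ occurs in $T[i..j]$ and $b_1 \neq b_2$, the occurrence of $b_2 w'$ forces an occurrence of $w'$ that is not the one immediately following $b_1$, so one of these occurrences of $w'$ is "internal" in the sense needed, giving $w' \alpha$ an occurrence in $T[i..j]$ contradicting $w_1[2..] = w' \alpha \notin T[i..j]$. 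Hence $b_1 = b_2$ and $w_1 = w_2$. Thus $|\typethree|$ is at most the number of distinct suffixes $w'$ of $T[i..j]$ for which $w' \alpha$ does not occur in $T[i..j]$.

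The final counting step bounds the number of such suffixes by $d-1$. The string $T[i..j]$ has exactly $d$ suffixes, of lengths $0, 1, \dots, d-1$ (I count nonempty suffixes; note $w'$ could be empty, corresponding to a length-2 MAW $b\alpha$, which is fine). Among these, the full-length suffix $T[i..j]$ itself cannot serve as a valid $w'$: if $w' = T[i..j]$ then $|w| = d+2 > |T[i..j+1]|$, so $w[..|w|-1] = bw'$ has length $d+1 > d+1$... more directly, $bw' = b\,T[i..j]$ has length $d+1$ and cannot occur in $T[i..j]$ of length $d$, contradicting condition (C). Hence $w'$ ranges over at most the $d-1$ suffixes of length $0$ through $d-2$ — wait, that is $d-1$ values $(0,1,\dots,d-2)$, but I also should exclude or include the length-$(d-1)$ suffix carefully. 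The length-$(d-1)$ suffix $w'$ would make $bw'$ of length $d$; this can occur in $T[i..j]$ only if $bw' = T[i..j]$, i.e.\ $b = T[i]$ and $w' = T[i+1..j]$, which is possible. So I get at most $d$ candidates minus the one forbidden full-length suffix, yielding $|\typethree| \le d-1$. I expect the main obstacle to be making the injectivity argument fully rigorous — specifically, carefully handling occurrences at the boundary position $j+1$ and the degenerate cases where $w'$ is empty or has length $d-1$, ensuring the "internal occurrence" reasoning that rules out $b_1 \neq b_2$ is airtight; the rest is a clean pigeonhole on the $d$ suffixes of $T[i..j]$.
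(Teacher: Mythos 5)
Your reduction of a Type-3 MAW $w=bw'\alpha$ to its ``middle'' $w'=w[2..|w|-1]$ is sound as far as it goes: since $w[2..]=w'\alpha$ occurs in $T[i..j+1]$ but not in $T[i..j]$, it can only occur as a suffix of $T[i..j+1]$, so $w'$ is indeed a suffix of $T[i..j]$. The gap is that the map $w\mapsto w'$ is \emph{not} injective, and your argument for injectivity does not go through. From two occurrences of $w'$ in $T[i..j]$ preceded by distinct characters $b_1\neq b_2$ you infer that some ``internal'' occurrence of $w'$ yields an occurrence of $w'\alpha$ in $T[i..j]$; but an internal occurrence of $w'$ may be followed by any character whatsoever, not necessarily $\alpha$, so no contradiction arises. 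Concretely, take $T[i..j]=\mathtt{bxdxcx}$ (so $d=6$) and $\alpha=\mathtt{a}$. Then $\mathtt{bxa}$ and $\mathtt{dxa}$ are both Type-3 MAWs for $\mathtt{bxdxcxa}$ (each of $\mathtt{bx}$, $\mathtt{dx}$ occurs in the window, $\mathtt{xa}$ occurs only as a suffix of the extended window, and the unique occurrences of $\mathtt{bx}$ and $\mathtt{dx}$ are followed by $\mathtt{d}$ and $\mathtt{c}$, so both candidates are absent), yet both map to the same suffix $w'=\mathtt{x}$; likewise $\mathtt{ba},\mathtt{da},\mathtt{ca}$ are all Type-3 MAWs mapping to $w'=\varepsilon$. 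So your pigeonhole over the $d$ suffixes of $T[i..j]$ bounds only the number of distinct middles $w'$, not $|\typethree|$ itself.

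The repair is to key the injection on the \emph{prefix} $w[..|w|-1]=bw'$ rather than on $w'$, which is what the paper does: map $w$ to the ending position of the \emph{leftmost} occurrence of $bw'$ in $T[i..j]$. That position can never be $j$ (otherwise $w$ itself would occur in $T[i..j+1]$, since $w[|w|]=T[j+1]$), so the range is $[i,j-1]$, of size $d-1$. For injectivity, if two Type-3 MAWs $w_1,w_2$ with $|w_1|\ge|w_2|$ were mapped to the same position, then, both ending in $\alpha$, $w_2$ would be a suffix of $w_1$; a proper suffix would make $w_2$ a substring of $w_1[2..]$, which occurs in $T[i..j+1]$ because $w_1$ is a MAW there --- contradicting that $w_2$ is absent. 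In the example above this injection separates $\mathtt{bxa}$ and $\mathtt{dxa}$, sending them to the distinct ending positions of $\mathtt{bx}$ and $\mathtt{dx}$. Your final counting step also wobbles (you first restrict $w'$ to the $d-1$ suffixes of length $0,\dots,d-2$ and then re-admit length $d-1$), but that becomes moot once the injection is set up on prefix occurrences.
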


\begin{proof}
  We show that there is an injection $f: \typethree \rightarrow [i, j-1]$
  which maps each MAW $w \in \typethree$ to
  the ending position of the leftmost occurrence of $w[..|w|-1]$ in
  the current window $T[i..j]$.
  
  First, we show that the range of this function $f$ is $[i, j-1]$.
  By definition,
  $w$ is absent from $T[i..j+1]$ and $w[|w|] = T[j+1]$
  for each $w \in \typethree$,
  and thus, no occurrence of $w[..|w|-1]$ in $T[i..j]$
  ends at position $j$.
  Hence, the range of $f$ does not contain the position $j$, i.e. it is $[i, j-1]$.
  
  Next, for the sake of contradiction,
  we assume that $f$ is not an injection,
  i.e. there are two distinct MAWs $w_1, w_2 \in \typethree$ such that $f(w_1) = f(w_2)$.
  Without loss of generality, assume $|w_1| \ge |w_2|$.
  Since $w_1[|w_1|] = w_2[|w_2|] = T[j+1]$ and $f(w_1) = f(w_2)$, $w_2$ is a suffix of $w_1$.
  If $|w_1| = |w_2|$, then $w_1 = w_2$ and it contradicts with $w_1 \neq w_2$.
  If $|w_1| > |w_2|$, then $w_2$ is a proper suffix of $w_1$, and it contradicts
  with the fact that $w_2$ is absent from $T[i..j+1]$~(see Figure~\ref{fig:MAW_Type3}).
  Therefore, $f$ is an injection and
  $|\typethree| \le j-1-i+1 = d-1$.
\end{proof}

\begin{figure}[tb]
    \centerline{\includegraphics[width=1.0\linewidth]{./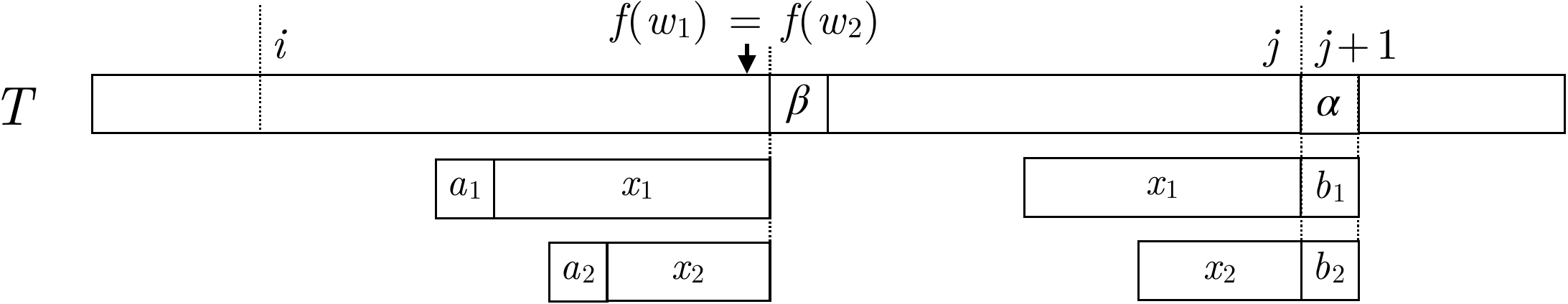}}
    \caption{
      Illustration for the contradiction in the proof of Lemma~\ref{lem:extention_inc}.
      Consider two strings $w_1 = a_1x_1b_1$ and $w_2 = a_2x_2b_2$ that are MAWs for $T$ of Type 3
      where $a_1, a_2, b_1, b_2 \in \Sigma$ and $x_1, x_2 \in \Sigma^\ast$.
      If $|w_1| > |w_2|$ and $f(w_1) = f(w_2)$, then $x_2$ is a proper suffix of $x_1$,
      and it contradicts that $a_2x_2b_2$ is absent from $T$.
    }\label{fig:MAW_Type3}
\end{figure}

Summing up all the upper bounds for
$\typeone$, $\typetwo$, and $\typethree$,
we obtain the following lemma:
\begin{lemma}\label{lem:extention_inc}
  For any $1 \le i \le j < n$,
  $|\MAW(T[i..j+1]) \setminus \MAW(T[i..j])| \le \sigma_{i,j} + d$, where
  $d = j - i + 1$.
\end{lemma}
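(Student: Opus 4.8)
The plan is to combine the three preceding lemmas additively. By definition, every MAW in $\MAW(T[i..j+1]) \setminus \MAW(T[i..j])$ is of Type 1, Type 2, or Type 3, since for such a MAW $w$ at least one of $w[2..]$ and $w[..|w|-1]$ fails to occur in $T[i..j]$ (otherwise $w$ would already have been a MAW for $T[i..j]$, as $w$ remains absent from the shorter string $T[i..j]$). Hence the three sets $\typeone$, $\typetwo$, $\typethree$ partition $\MAW(T[i..j+1]) \setminus \MAW(T[i..j])$, and
\[
  |\MAW(T[i..j+1]) \setminus \MAW(T[i..j])| = |\typeone| + |\typetwo| + |\typethree|.
\]

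Next I would plug in the bounds: $|\typeone| \le 1$ from Lemma~\ref{type1max}, $|\typetwo| \le \sigma_{i,j}$ from Lemma~\ref{type2max}, and $|\typethree| \le d-1$ from Lemma~\ref{type3max}. Summing gives $1 + \sigma_{i,j} + (d-1) = \sigma_{i,j} + d$, which is exactly the claimed bound.

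There is essentially no obstacle here; the only thing worth double-checking is that the three types are genuinely disjoint and exhaustive over the set $\MAW(T[i..j+1]) \setminus \MAW(T[i..j])$, which is immediate from how Type 1/2/3 are defined by the two-way case split on whether $w[2..]$ and $w[..|w|-1]$ occur in $T[i..j]$ — the case where both occur is excluded precisely because then $w$ would be in $\MAW(T[i..j])$, contradicting membership in the difference. So the proof is a one-line summation once the three lemmas are in place.
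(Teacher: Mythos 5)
Your proposal is correct and matches the paper's own proof, which likewise just sums the bounds from Lemmas~\ref{type1max}, \ref{type2max}, and \ref{type3max} using the disjointness of $\typeone$, $\typetwo$, and $\typethree$. Your extra remark on why the three types are exhaustive (the case where both $w[2..]$ and $w[..|w|-1]$ occur in $T[i..j]$ is excluded by $w \notin \MAW(T[i..j])$) is exactly the observation the paper relegates to its footnote on the type classification.
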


\begin{proof}
  Immediately follows from Lemmas~\ref{type1max}, \ref{type2max}, and \ref{type3max} and that $\typeone$, $\typetwo$, and $\typethree$ are mutually disjoint.
\end{proof}

Now we obtain the main result of this subsection,
which shows the matching upper and lower bounds for
$|\MAW(T[i..j+1]) \bigtriangleup \MAW(T[i..j])|$.

\begin{theorem}\label{lem:extention_total}
  For any $1 \le i \le j < n$,
  $|\MAW(T[i..j+1]) \bigtriangleup \MAW(T[i..j])| \le \sigma_{i,j} + d + 1$, where
  $d = j-i+1$.
  The upper bound is tight
  when $\sigma \ge 3$ and $\sigma_{i,j} + 1 \le \sigma$.
\end{theorem}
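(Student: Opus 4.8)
The plan is to split the statement into its two halves: the upper bound, which follows almost immediately from the lemmas already proved, and the tightness, which requires constructing an explicit family of strings.

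For the upper bound, I would combine Lemma~\ref{lem:extention_dec}, which gives $|\MAW(T[i..j])\setminus\MAW(T[i..j+1])| = 1$, with Lemma~\ref{lem:extention_inc}, which gives $|\MAW(T[i..j+1])\setminus\MAW(T[i..j])| \le \sigma_{i,j}+d$. Since the symmetric difference is the disjoint union of these two sets, we get $|\MAW(T[i..j+1])\bigtriangleup\MAW(T[i..j])| \le \sigma_{i,j}+d+1$. This is one or two lines.

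The substantive part is tightness. I want a window $T[i..j]$ of length $d$ together with an appended character $\alpha = T[j+1]$ such that all three of Lemma~\ref{type1max}, Lemma~\ref{type2max}, and Lemma~\ref{type3max} are simultaneously saturated, and the single deleted MAW is distinct from all added ones. Concretely I would aim for: $|\typeone|=1$ (some power $\alpha^k$ becomes a MAW), $|\typetwo| = \sigma_{i,j}$ (one new MAW ending in each of the $\sigma_{i,j}$ characters of the window), and $|\typethree| = d-1$ (one new MAW for each ending position in $[i,j-1]$ of the leftmost occurrence of its length-$(|w|-1)$ prefix). Guided by the worked Example~2 in the excerpt — where appending $\mathtt{c}$ to $\mathtt{cbaaaac}$ produced the Type-3 chain $\mathtt{bac, baac, baaac}$ from a long run $\mathtt{aaaa}$ — the natural candidate is a string of the form $c\, b\, a^{d-2}$ (so $\sigma_{i,j}=3$), appending $\alpha=c$; the long run of $\mathtt{a}$'s generates the $d-1$ Type-3 MAWs $\mathtt{bac}, \mathtt{baac}, \ldots, \mathtt{ba^{d-2}c}$ plus $\mathtt{a^{d-1}}$... which is Type 1, etc. More generally, to reach arbitrary $\sigma_{i,j}$ (subject to $\sigma_{i,j}+1\le\sigma$, so that $\alpha$ can be chosen not in the window when needed, or can be reused appropriately), I would prepend a block of distinct fresh characters $c_1 c_2 \cdots c_{\sigma_{i,j}-2}$ in front of $b\,a^{d-\sigma_{i,j}}$ and argue each contributes one Type-2 MAW. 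I would then verify directly, via conditions (A)–(C), that each listed string is indeed a MAW for $T[i..j+1]$, is not a MAW for $T[i..j]$, and falls in the claimed type, and that the one deleted MAW (the longest repeating suffix extended appropriately, per Lemma~\ref{lem:extention_dec}) is not among them; summing gives exactly $\sigma_{i,j}+d+1$.

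The main obstacle I anticipate is the bookkeeping in the tightness construction: designing a single family that hits all three bounds at once — in particular making the Type-2 count reach the full $\sigma_{i,j}$ while not accidentally destroying Type-3 occurrences or collapsing two would-be MAWs into one — and then checking conditions (A)–(C) for every string in the (size-$\Theta(d)$) list without hand-waving. The role of the hypothesis $\sigma \ge 3$ and $\sigma_{i,j}+1 \le \sigma$ is precisely to give enough alphabet room: we need at least three letters for the $c,b,a$ skeleton that drives the Type-3 run, and we need one spare letter beyond those in the window so that the Type-1 and Type-2 witnesses can be arranged consistently. I would state the family explicitly, tabulate $\MAW$ before and after (as in Example~2), and conclude.
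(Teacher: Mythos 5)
Your upper-bound argument is exactly the paper's: Lemma~\ref{lem:extention_dec} plus Lemma~\ref{lem:extention_inc} and the disjointness of the two set differences give $\sigma_{i,j}+d+1$ in one line. The tightness construction, however, has a genuine gap in the form you actually commit to. Your primary candidate takes the window $\mathtt{c}\mathtt{b}\mathtt{a}^{d-2}$ and appends $\alpha=\mathtt{c}$, a character that \emph{already occurs} in the window. This choice can never saturate the bound: when $T[j+1]$ already occurs in $T[i..j]$, the Type-1 candidate $\alpha^{k+2}$ and a would-be Type-2 MAW collide, so $|\typeone|+|\typetwo|\le\sigma_{i,j}$ rather than $\sigma_{i,j}+1$ --- this is precisely Lemma~\ref{MAW1and2_collide} and Corollary~\ref{col:extention_total} of the paper, which cap the symmetric difference at $\sigma_{i,j}+d$ in that case. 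Concretely, for $\mathtt{c}\mathtt{b}\mathtt{a}^{d-2}\cdot\mathtt{c}$ the string $\mathtt{cc}$ is already a MAW of the old window (so $\typeone=\emptyset$), and $\mathtt{b}\mathtt{a}^{d-2}\mathtt{c}$ occurs in the new window, so your Type-3 list is also one short and miscounted ($e$ ranges over $d-3$ values, not $d-1$). Your parenthetical remark that ``$\alpha$ can be chosen not in the window when needed'' is the correct repair, and you correctly identify $\sigma_{i,j}+1\le\sigma$ as supplying the spare letter, but you never commit to that choice and your worked prototype contradicts it.

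The paper's construction makes the fresh-character choice essential: $Z=a_1a_2\cdots a_{\sigma_{1,d}-1}a_{\sigma_{1,d}}^{d-\sigma_{1,d}+1}$ with $\alpha=a_{\sigma_{1,d}+1}$ \emph{not occurring in} $Z$. Then the unique deleted MAW is the length-one MAW $\alpha$ itself, $\typeone=\{\alpha^2\}$ genuinely is Type~1 because $\alpha$ is absent from $Z$, $\typetwo=\{\alpha a_i \mid 1\le i\le \sigma_{1,d}\}$ contributes the full $\sigma_{1,d}$, and $\typethree$ consists of the $\sigma_{1,d}-1$ strings $a_i\alpha$ together with the $d-\sigma_{1,d}$ strings $a_{\sigma_{1,d}-1}a_{\sigma_{1,d}}^{e}\alpha$, totalling $d-1$; the grand total is exactly $\sigma_{1,d}+d+1$. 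So your skeleton (a block of distinct letters followed by a run of the last one) is the right one, but the appended character must be new to the window, and the verification you defer as ``bookkeeping'' is exactly where your stated version breaks.
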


\begin{proof}
  By Lemma~\ref{lem:extention_dec} and Lemma~\ref{lem:extention_inc}, we have
  $|\MAW(T[i..j+1]) \bigtriangleup \MAW(T[i..j])| = |\MAW(T[i..j+1]) \setminus \MAW(T[i..j])| + |\MAW(T[i..j]) \setminus \MAW(T[i..j+1])|\
  \le \sigma_{i,j} + d + 1$.
  
  In the following, we show that the upper bound is tight, i.e.
  there is a string $Z$ of length $d$ and a character $\alpha$,
  where $|\MAW(Z) \bigtriangleup \MAW(Z\alpha)| = \sigma_{1,d} + d + 1$
  for any two integers $d$ and $\sigma_{1,d}$ with $1 \le \sigma_{1,d} \le d$ and $\sigma_{1,d} + 1 \le \sigma$.
  Namely, in this example, we set $i = 1$ and $j = d$.
  Let $\Sigma = \{a_1, a_2, \cdots, a_\sigma\}$ be an alphabet.
  Given two integers $d$ and $\sigma_{1,d}$
  with $1 \le \sigma_{1,d} \le d$ and $\sigma_{1,d} + 1 \le \sigma$,
  consider a string $Z = a_1a_2 \cdots a_{\sigma_{1,d}-1}a_{\sigma_{1,d}}^{d-\sigma_{1,d}+1}$ of length $d$
  and a character $\alpha = a_{\sigma_{1,d}+1}$.
  Then,
  $$\MAW(Z) \setminus \MAW(Z\alpha) = \{\alpha\}.$$
  Also, 
  \begin{eqnarray*}
    \MAW(Z\alpha) \setminus \MAW(Z) & = & 
    \{\alpha^2\} \cup 
    \{\alpha a_i\mid 1 \le i \le \sigma_{1,d} \} \cup 
    \{ a_i\alpha\mid 1 \le i \le \sigma_{1,d} -1\} \\ 
    & & \cup \{a_{\sigma_{1,d}-1}a_{\sigma_{1,d}}^e\alpha\mid 1 \le e \le d-\sigma_{1,d}\}.
  \end{eqnarray*}
  This leads to the matching lower bound $|\MAW(Z) \bigtriangleup \MAW(Z\alpha)| = \sigma_{1,d} + d + 1$.
\end{proof}

A concrete example for our lower-bound strings $Z$ and $Z\alpha$ is shown below.

\begin{myexample}
  Consider a string $Z = \mathtt{abcddd}$ with $\sigma_{1,6} = 4$, and let $d = |Z| = 6$. We have $d-\sigma_{1,6}+1 = 3$.
  Also, let $\alpha = \mathtt{e}$.
  Then,
  \[
   \MAW(\mathtt{abcddd}) \setminus \MAW(\mathtt{abcddde}) = \{\mathtt{e}\}
  \]
  and
  \begin{eqnarray*}
    \lefteqn{\MAW(\mathtt{abcddde}) \setminus \MAW(\mathtt{abcddd})} \\
    & = & \typeone \cup \typetwo \cup \typethree \\
    & = & \{\mathtt{ee}\} \cup \{\mathtt{ea, eb, ec, ed}\} \cup \{\mathtt{ae, be, ce, cde, cdde}\},
  \end{eqnarray*}
  and therefore $|\MAW(Z) \bigtriangleup \MAW(Z\alpha)| = \sigma_{1,6} + d + 1 = 11$.
\end{myexample}

\subsubsection{Changes to MAWs when a character already occurring in the window is added to the right}

In this subsection, we consider the case where a new character $T[j+1]$ that is appended to the right of the current window $T[i..j]$ already occurs in $T[i..j]$. This means that $\sigma_{i,j} = \sigma_{i,j+1}$, i.e., the alphabet size does not increase before and after the new character is added.

The next lemma shows that a conflict occurs between $\typeone$ and $\typetwo$ when $\sigma_{i,j} = \sigma_{i,j+1}$.

\tanote*{generalized lemma for the non binary case}{%
\begin{lemma}\label{MAW1and2_collide}
  For any $T[i..j]$ such that $d = j-i+1 \geq 3$ and $\sigma_{i,j} = \sigma_{i,j+1}$,
  $|\typeone|+|\typetwo| \leq \sigma_{i,j}$.
\end{lemma}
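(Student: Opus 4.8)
## Proof Proposal for Lemma~\ref{MAW1and2_collide}

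The plan is to show that when $\sigma_{i,j} = \sigma_{i,j+1}$, the single candidate for a Type 1 MAW (which by Lemma~\ref{type1max} must be of the form $\alpha^k$, where $\alpha = T[j+1]$) "uses up" one of the at most $\sigma_{i,j}$ slots that Lemma~\ref{type2max} allots to Type 2 MAWs. Recall from the proof of Lemma~\ref{type2max} that the MAWs in $\typetwo$ have pairwise distinct last characters, and each such last character occurs in $T[i..j]$; this is exactly what gives $|\typetwo| \le \sigma_{i,j}$. So it suffices to exhibit one character $c$ occurring in $T[i..j]$ that is \emph{not} the last character of any MAW in $\typetwo$ whenever $\typeone \neq \emptyset$. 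I would take $c = \alpha$ itself: since $\alpha = T[j+1]$ occurs in $T[i..j]$ by the hypothesis $\sigma_{i,j} = \sigma_{i,j+1}$, it is a legitimate "slot", and I claim no Type 2 MAW ends with $\alpha$ when a Type 1 MAW exists.

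The key step is therefore the following claim: if $\typeone \neq \emptyset$, then no MAW $w \in \typetwo$ has $w[|w|] = \alpha$. Suppose for contradiction that $w \in \typetwo$ ends with $\alpha$; write $w = u\alpha$. By the definition of Type 2, $w[2..] = u[2..]\alpha$ occurs in $T[i..j]$ but $w[..|w|-1] = u$ does not occur in $T[i..j]$. On the other hand, $w = u\alpha \in \MAW(T[i..j+1])$, so $u = w[..|w|-1]$ occurs in $T[i..j+1]$; since $u$ does not occur in $T[i..j]$, every occurrence of $u$ in $T[i..j+1]$ must use position $j+1$, i.e. $u$ is a suffix of $T[i..j+1]$ ending at $j+1$, hence $u[|u|] = \alpha$. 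Also from $w = u\alpha \in \MAW(T[i..j+1])$, the string $w[2..] = u[2..]\alpha$ occurs in $T[i..j+1]$; combined with the Type 2 condition it occurs already in $T[i..j]$. Meanwhile, $\typeone \neq \emptyset$ means $\alpha^k \in \MAW(T[i..j+1])$ for the appropriate $k \ge 1$, which in particular forces $\alpha^{k-1}$ to occur in $T[i..j+1]$ and $\alpha^k$ to be absent; I would unpack what this says about runs of $\alpha$ in $T[i..j+1]$ — namely the longest run of $\alpha$'s has length exactly $k-1$, and since $T[j+1] = \alpha$ this run of length $k-1$ is a suffix of $T[i..j+1]$.

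Putting these together: $u$ is a suffix of $T[i..j+1]$ ending in $\alpha$, and $u \notin T[i..j]$ while $u[2..] \in T[i..j]$, so $u$ is a "minimal" new suffix; I expect to argue that the only such $u$ is a power of $\alpha$ (because the suffix of $T[i..j+1]$ of length $|u|$ that is new but whose length-$(|u|-1)$ tail is old must lie inside the terminal run of $\alpha$'s, using $d \ge 3$ to rule out degenerate short cases), and then $w = u\alpha = \alpha^{|u|+1}$ would itself be a power of $\alpha$ — but a power of $\alpha$ cannot be in $\typetwo$, since for $w = \alpha^m$ we have $w[2..] = w[..|w|-1] = \alpha^{m-1}$, so $w[2..]$ occurs in $T[i..j]$ iff $w[..|w|-1]$ does, contradicting the defining asymmetry of Type 2. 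This contradiction establishes the claim, and hence $|\typeone| + |\typetwo| \le 1 + (\sigma_{i,j} - 1) = \sigma_{i,j}$ when $\typeone \neq \emptyset$; when $\typeone = \emptyset$ the bound is immediate from Lemma~\ref{type2max}.

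The main obstacle I anticipate is the careful case analysis in the last step — precisely pinning down which suffix $u$ of $T[i..j+1]$ can be "newly appearing with an old proper prefix" and showing it must be a run of $\alpha$'s. This is where the hypothesis $d \ge 3$ is presumably needed: short windows admit pathological configurations (e.g. $d = 1$ or $d = 2$) where a Type 1 and a Type 2 MAW can coexist without colliding, which is exactly why the lemma excludes them and why the paper treats $d \le 2$ separately in Lemmas~\ref{lem:case_d=1} and~\ref{lem:case_d=2}. I would handle it by reasoning explicitly about the leftmost occurrence of $u[2..]$ in $T[i..j]$ and comparing it with the terminal $\alpha$-run, rather than by brute-force enumeration.
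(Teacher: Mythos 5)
Your proposal is correct and follows essentially the same route as the paper's proof: both argue that when the Type-1 MAW $\alpha^k$ exists, no Type-2 MAW can end with $\alpha=T[j+1]$, which frees one of the $\sigma_{i,j}$ last-character slots from Lemma~\ref{type2max}. The only cosmetic difference is in the final contradiction --- the paper splits into cases on the length of $u'$ relative to the terminal $\alpha$-run, whereas you observe directly that the minimal ``new'' suffix $u$ must lie inside that run, so $w=u\alpha$ is a power of $\alpha$ and hence cannot satisfy the asymmetry defining Type 2; both arguments are sound.
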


\begin{proof}
  Let $\mathtt{c} = T[j+1]$ and let $k$ be the length of the maximum run of $\mathtt{c}$'s that is a suffix of $T[i .. j]$.
  If $T[j] \neq \mathtt{c}$ then let $k = 0$.
  By the definition of $\typeone$, $\mathtt{c}^{k+2}$ is the only candidate
  for the Type-1 MAW for $T[i .. j+1]$, in which case $au = ub = \mathtt{c}^{k+1}$ occurs only once
  in $T[i .. j+1]$ as a suffix.
  This means that $\mathtt{c}^{k+2}$ can be a Type-1 MAW for $T[i .. j+1]$
  only if $\mathtt{c}^k$ is the longest run of $\mathtt{c}$'s in $T[i..j]$.

  Now suppose that $\mathtt{c}^{k+2}$ is a Type-1 MAW for $T[i .. j+1]$,
  and let $a'u'\mathtt{c}$ denote a Type-2 MAW for $T[i .. j+1]$.
  Then, by definition, $a'u'$ occurs only once in $T[i .. j+1]$ as a suffix
  (see also the middle of Figure~\ref{fig:MAW_3types}).
  \begin{itemize}
    \item If $|u'| \geq k$, then 
    $\mathtt{c}^{k+1}$ is a suffix of $u'$ as shown in Figure~\ref{binarymaw_figure2}.
    However, by the definition of Type-2 MAWs,
    $u'\mathtt{c}$ must occur in $T[i..j]$ (see also the middle of Figure~\ref{fig:MAW_3types}),
    which implies that $\mathtt{c}^{k+1}$ occurs in $T[i..j]$.
    This contradicts that $\mathtt{c}^k$ is the longest run of $\mathtt{c}$'s in $T[i..j]$.

   \item If $|u'| < k$,
     then $a'u'\mathtt{c} = \mathtt{c}^{|a'u'\mathtt{c}|}$ with $|a'u'\mathtt{c}| \leq k+1$ occurs in $T[i .. j+1]$ as a suffix,
     and this contradicts that $a'u'\mathtt{c}$ is a MAW for $T[i .. j+1]$.
  \end{itemize}
  Hence $a'u'\mathtt{c}$ cannot be in $\typetwo$, which leads to $|\typetwo| \leq \sigma_{i,j}-1$ by Lemma \ref{type2max}.
  Thus, $|\typeone|+|\typetwo| \leq \sigma_{i,j}$
  for any string $T[i .. j+1]$ such that $T[i..j]$ contains at least one character that is equal to $T[j+1]$.
\end{proof}

\begin{figure}[tb]
 \centering
 \includegraphics[keepaspectratio, scale=0.35]
      {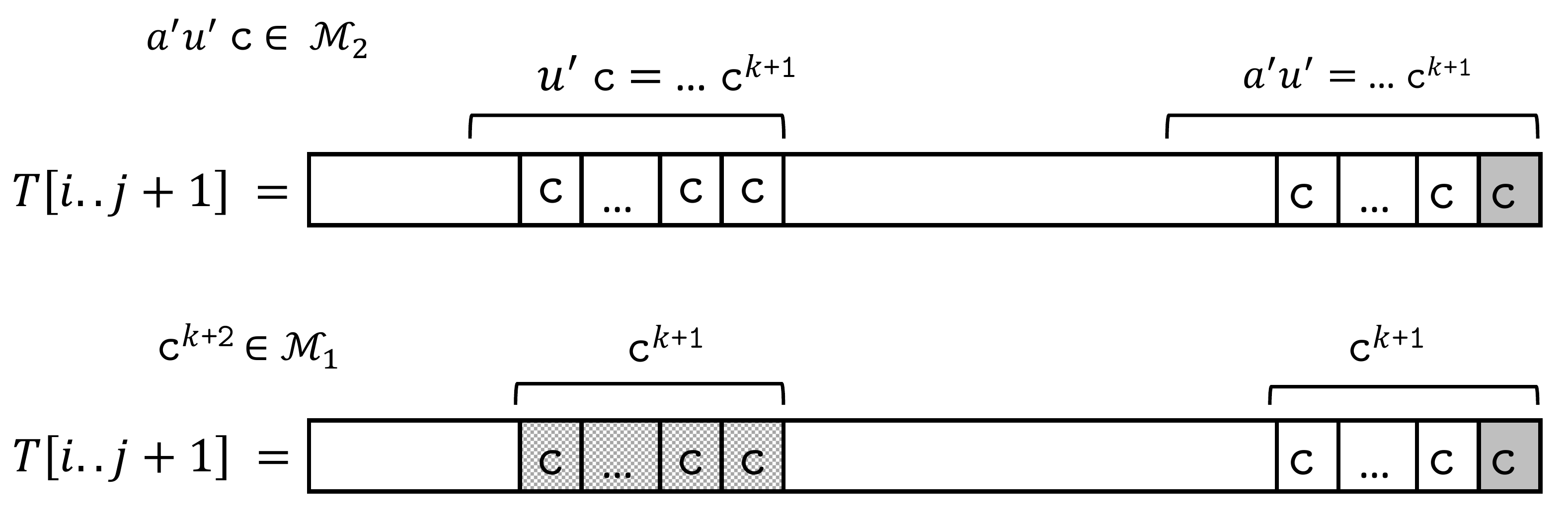}
 \caption{Collision between the new Type-2 MAW and Type-1 MAW, where the rightmost $\mathtt{c}$ in gray is the new appended character in each picture.}
 \label{binarymaw_figure2}
\end{figure}

Recall that Lemma~\ref{type1max} and Lemma~\ref{type2max} in the case where $\sigma_{i,j+1} \geq \sigma_{i,j}$
gives us $|\typeone|+|\typetwo| = \sigma_{i,j}+1$.
Compared to this, Lemma~\ref{MAW1and2_collide} shaves the total size of $\typeone$ and $\typetwo$ by one
in the case where $T[j+1]$ already occurs in $T[i..j]$.
Coupled with Lemma~\ref{type3max}, Lemma~\ref{MAW1and2_collide} leads us to the following corollary:

\begin{corollary}\label{col:extention_total}
  For any $1 \le i \le j < n$,
  $|\MAW(T[i..j+1]) \bigtriangleup \MAW(T[i..j])| \le \sigma_{i,j+1} + d$, where
  $d = j-i+1$.  
\end{corollary}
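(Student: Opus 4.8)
The plan is to assemble Corollary~\ref{col:extention_total} by combining the refined bound from Lemma~\ref{MAW1and2_collide} with the Type-3 bound of Lemma~\ref{type3max} and the deletion bound of Lemma~\ref{lem:extention_dec}, while carefully handling the small-$d$ boundary cases that Lemma~\ref{MAW1and2_collide} does not cover. First I would fix $1 \le i \le j < n$, set $d = j-i+1$, and write
\[
  |\MAW(T[i..j+1]) \bigtriangleup \MAW(T[i..j])|
  = |\typeone| + |\typetwo| + |\typethree| + |\MAW(T[i..j]) \setminus \MAW(T[i..j+1])|,
\]
which is valid because $\typeone, \typetwo, \typethree$ are mutually disjoint and the last term is the count of deleted MAWs. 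By Lemma~\ref{lem:extention_dec} the last term equals $1$.

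Next I would split on whether the appended character $T[j+1]$ already occurs in $T[i..j]$. If it does not, then $\sigma_{i,j+1} = \sigma_{i,j}+1$, so the trivial bounds $|\typeone| \le 1$, $|\typetwo| \le \sigma_{i,j}$, $|\typethree| \le d-1$ give a total of at most $(\sigma_{i,j}+1) + (d-1) + 1 = \sigma_{i,j} + d + 1 = \sigma_{i,j+1} + d$, as desired. If $T[j+1]$ already occurs in $T[i..j]$, then $\sigma_{i,j+1} = \sigma_{i,j}$; for $d \ge 3$ Lemma~\ref{MAW1and2_collide} yields $|\typeone|+|\typetwo| \le \sigma_{i,j}$, and adding $|\typethree| \le d-1$ and the single deleted MAW gives a total of at most $\sigma_{i,j} + (d-1) + 1 = \sigma_{i,j} + d = \sigma_{i,j+1} + d$.

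The step I expect to require the most care is the remaining boundary: when $T[j+1]$ occurs in $T[i..j]$ but $d \in \{1,2\}$, so Lemma~\ref{MAW1and2_collide} is not applicable. I would argue these cases directly. For $d=1$ the window $T[i..i]$ is a single character, and $T[j+1]$ occurring in it forces $T[i+1] = T[i]$, so $T[i..i+1] = \mathtt{cc}$; one then checks by hand that $\typeone = \{\mathtt{ccc}\}$, $\typetwo = \emptyset$ (no length-$\ge 2$ suffix other than $\mathtt{cc}$ itself can serve), $\typethree = \emptyset$, and exactly one MAW is deleted, so the symmetric difference has size $2 \le 1 + 1 = \sigma_{i,j+1} + d$. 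For $d=2$ with $T[i+1..i+2] = T[i]T[i+1]$ and $T[i+2]$ appearing among $\{T[i],T[i+1]\}$, a short enumeration (distinguishing $T[i]=T[i+1]$ from $T[i]\ne T[i+1]$) shows the total change is at most $\sigma_{i,j+1}+2$; since $\sigma_{i,j+1}\ge 1$ this small finite check closes the gap. Combining the three cases establishes the corollary for all $1 \le i \le j < n$, and by symmetry the analogous statement holds for the deletion operation. I would also remark that this mirrors the discussion around Lemmas~\ref{lem:case_d=1} and~\ref{lem:case_d=2} referenced earlier, so the boundary treatment is consistent with the rest of the paper.
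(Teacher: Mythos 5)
Your proof is correct and follows essentially the same route as the paper, which derives the corollary by combining Lemma~\ref{MAW1and2_collide} with Lemma~\ref{type3max} and the single deleted MAW from Lemma~\ref{lem:extention_dec} (with the case where $T[j+1]$ is a new character absorbed by $\sigma_{i,j+1}=\sigma_{i,j}+1$). You are in fact somewhat more careful than the paper, which states the corollary without explicitly treating the $d\in\{1,2\}$ boundary excluded by the hypothesis of Lemma~\ref{MAW1and2_collide}; your direct enumeration there checks out.
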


  }

\subsection{Changes to MAWs when the leftmost character is deleted}
\label{sec:remove}
Next, we analyze the number of changes of MAWs when deleting the leftmost character from a string.
By a symmetric argument to Theorem~\ref{lem:extention_total}, we obtain:

\begin{corollary}\label{lem:reduction_total}
  For any $1 < i \le j \le n$,
  $|\MAW(T[i.. j]) \bigtriangleup \MAW(T[i-1.. j])| \le \sigma_{i,j} + d + 1$ where
  $d = j-i+1$ and
  $\sigma_{i,j}$ is the number of distinct characters that occur in $T[i.. j]$.
  Also, the upper bound is tight
  when $\sigma \ge 3$ and $\sigma_{i,j} + 1 \le \sigma$.
\end{corollary}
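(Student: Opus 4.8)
The plan is to exploit the symmetry that the paper has already flagged between appending a character to the right end and deleting a character from the left end. Concretely, I would introduce the reversal operator: for a string $S = S[1]S[2]\cdots S[m]$, let $S^{R} = S[m]\cdots S[2]S[1]$. The key observation is that a string $w$ is a MAW for $S$ if and only if $w^{R}$ is a MAW for $S^{R}$, because "$w$ occurs in $S$" is equivalent to "$w^{R}$ occurs in $S^{R}$", and reversal is a length-preserving bijection on substrings that also commutes with taking proper substrings. Hence $w \mapsto w^{R}$ is a bijection from $\MAW(S)$ onto $\MAW(S^{R})$, and consequently $|\MAW(X) \bigtriangleup \MAW(Y)| = |\MAW(X^{R}) \bigtriangleup \MAW(Y^{R})|$ for any two strings $X, Y$.

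First I would set $X = T[i..j]$ and $Y = T[i-1..j]$, so that $Y$ is obtained from $X$ by prepending the character $T[i-1]$ to the left. Then $X^{R}$ and $Y^{R}$ satisfy $Y^{R} = X^{R}\,T[i-1]$, i.e. $Y^{R}$ is obtained from $X^{R}$ by appending the single character $T[i-1]$ to the right. Writing $Z = X^{R}$ and $\alpha = T[i-1]$, we are in exactly the situation of Theorem~\ref{lem:extention_total} applied to the string $Z$ of length $d = j - i + 1$ and the appended character $\alpha$. That theorem gives $|\MAW(Z\alpha) \bigtriangleup \MAW(Z)| \le \sigma' + d + 1$, where $\sigma'$ is the number of distinct characters in $Z = X^{R}$. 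Since reversal does not change the multiset of characters, $\sigma'$ equals the number of distinct characters in $X = T[i..j]$, namely $\sigma_{i,j}$. Combining this with the reversal bijection yields $|\MAW(T[i..j]) \bigtriangleup \MAW(T[i-1..j])| = |\MAW(Z\alpha) \bigtriangleup \MAW(Z)| \le \sigma_{i,j} + d + 1$, which is the desired upper bound.

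For the tightness claim, I would again transport the extremal construction through reversal. Theorem~\ref{lem:extention_total} exhibits, for any $d$ and $\sigma_{1,d}$ with $1 \le \sigma_{1,d} \le d$ and $\sigma_{1,d} + 1 \le \sigma$ (and $\sigma \ge 3$), a string $Z$ of length $d$ and a character $\alpha$ with $|\MAW(Z) \bigtriangleup \MAW(Z\alpha)| = \sigma_{1,d} + d + 1$. Now consider the string $(Z\alpha)^{R} = \alpha Z^{R}$; it is obtained from $Z^{R}$ by prepending $\alpha$, the number of distinct characters in $Z^{R}$ is $\sigma_{1,d}$, and by the reversal bijection $|\MAW(Z^{R}) \bigtriangleup \MAW(\alpha Z^{R})| = |\MAW(Z) \bigtriangleup \MAW(Z\alpha)| = \sigma_{1,d} + d + 1$. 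Taking $i - 1$ and $j$ to index this string (so $T[i-1..j] = \alpha Z^{R}$ and $T[i..j] = Z^{R}$) shows the bound is attained whenever $\sigma \ge 3$ and $\sigma_{i,j} + 1 \le \sigma$.

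Most of this is routine once the reversal bijection is stated; the only point requiring a little care — and the step I would treat as the main obstacle — is verifying cleanly that $\MAW(S) \leftrightarrow \MAW(S^{R})$ via $w \mapsto w^{R}$, i.e. checking the three defining conditions (A), (B), (C) from the preliminaries transform correctly: "$w$ absent from $S$" $\Leftrightarrow$ "$w^{R}$ absent from $S^{R}$"; "$w[2..]$ occurs in $S$" $\Leftrightarrow$ "$(w^{R})[..|w|-1] = (w[2..])^{R}$ occurs in $S^{R}$"; and symmetrically for condition (C). Each of these is immediate from the fact that $u$ occurs in $S$ iff $u^{R}$ occurs in $S^{R}$, together with $(w[2..])^{R} = w^{R}[..|w|-1]$ and $(w[..|w|-1])^{R} = w^{R}[2..]$. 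Since the statement is billed as "a symmetric argument", I would keep the write-up short: state the reversal bijection as a one-line observation, note that it preserves symmetric differences and alphabet sizes, and then simply invoke Theorem~\ref{lem:extention_total} on the reversed instance for both the upper bound and the matching construction.
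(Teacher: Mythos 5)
Your proposal is correct and matches the paper's intended argument: the paper derives this corollary solely ``by a symmetric argument to Theorem~\ref{lem:extention_total}'' without further detail, and your reversal bijection $w \mapsto w^{R}$ (preserving occurrence, MAW-hood, symmetric differences, and alphabet size) is precisely that symmetric argument made explicit, for both the upper bound and the transported tightness construction.
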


%

%
Finally, by combining Theorem~\ref{lem:extention_total} and Corollary~\ref{lem:reduction_total},
we obtain the next theorem:
\begin{theorem}\label{col:increase_MAW_at_one_slide}
  Let $d$ be the window length.
  For any string $T$ of length $n > d$ and each position
  $i$ in $T$ with $1 \le i \le n-d$,
  $|\MAW(T[i..i+d-1]) \bigtriangleup \MAW(T[i+1..i+d])| \in O(d)$.
  Also, there exists a string $T'$ with $|T'| \geq d+1$
  which satisfies
  $|\MAW(T'[j..j+d-1]) \bigtriangleup \MAW(T'[j+1..j+d])| \in \Omega(d)$
  for some $j$ with $1 \le j \le |T'|-d$.
\end{theorem}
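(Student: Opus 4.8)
The plan is to assemble Theorem~\ref{col:increase_MAW_at_one_slide} out of the two one-sided results already established, treating the shift $T[i..i+d-1]\to T[i+1..i+d]$ as the composition of two elementary moves: first append the character $T[i+d]$ to $T[i..i+d-1]$ to obtain $T[i..i+d]$, then delete the leftmost character $T[i]$ from $T[i..i+d]$ to obtain $T[i+1..i+d]$. The upper bound then follows from the triangle inequality for the symmetric difference, $|\MAW(A)\bigtriangleup\MAW(C)|\le|\MAW(A)\bigtriangleup\MAW(B)|+|\MAW(B)\bigtriangleup\MAW(C)|$, applied with $A=T[i..i+d-1]$, $B=T[i..i+d]$, and $C=T[i+1..i+d]$. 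Each of the two terms on the right is bounded: the first by Theorem~\ref{lem:extention_total}, giving at most $\sigma_{i,i+d-1}+d+1$, and the second by Corollary~\ref{lem:reduction_total}, giving at most $\sigma_{i+1,i+d}+d+1$ (note the window $B$ has length $d+1$, so the relevant parameter there is $(d+1)-1=d$). Since both $\sigma_{i,i+d-1}$ and $\sigma_{i+1,i+d}$ are at most $d$, the sum is $O(d)$, which is the claimed asymptotic upper bound. I would state this cleanly and note that one could also invoke the nonasymptotic bound $d+\sigma_{i,i+d-1}+1$ from~(\ref{eqn:tight_bounds}) if a sharper constant were desired, but for the $O(d)$ statement the crude sum suffices.

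For the matching lower bound $\Omega(d)$, the plan is to exhibit an explicit family of strings, one for each value of $d$, on which a single one-position shift already changes the set of MAWs by $\Omega(d)$ entries. The natural candidate is essentially the lower-bound construction from the proof of Theorem~\ref{lem:extention_total}: take a string of the form $Z=a_1a_2\cdots a_{k-1}a_k^{\,d-k+1}$ of length $d$ over an alphabet of size at least $k+1$, so that appending $\alpha=a_{k+1}$ creates $\Theta(d)$ new MAWs (the family $\{a_{k-1}a_k^{\,e}\alpha : 1\le e\le d-k\}$ alone has $d-k$ members, which is $\Theta(d)$ for fixed $k$). To convert this "append" example into a genuine one-position window shift, I would embed $Z\alpha$ as a window inside a slightly longer host string $T'$: set $T' = \beta\,Z\,\alpha$ for a fresh character $\beta$, so that the length-$(d+1)$ window at position $1$ is $\beta Z$ and the window at position $2$ is $Z\alpha$. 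Then $\MAW(\beta Z)\bigtriangleup\MAW(Z\alpha)$ contains all the $\Theta(d)$ MAWs that appear when $\alpha$ is appended to $Z$ — these cannot be cancelled by the simultaneous deletion of $\beta$, because the MAWs in question (such as $a_{k-1}a_k^{\,e}\alpha$) do not contain $\beta$ and their membership status depends only on the suffix of the window — giving the desired $\Omega(d)$ bound with $j=1$ and $|T'|=d+2\ge d+1$. Alternatively, one can pad with a run of $a_k$'s on the left so that deleting the leftmost character is a no-op on the relevant substring structure; I would pick whichever padding makes the bookkeeping shortest.

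The main obstacle, and the step deserving the most care, is verifying that the $\Theta(d)$ MAWs created by the append are genuinely not destroyed by the accompanying left deletion — i.e. that the lower bound for the full shift is not eroded by cancellation between the "add" and "remove" halves. This requires checking that a representative subfamily of the new MAWs for $Z\alpha$ (the strings $a_{k-1}a_k^{\,e}\alpha$, or whatever subfamily we commit to) are (i) absent from $\beta Z$ and (ii) still MAWs for $Z\alpha$, both of which hold because these strings avoid $\beta$ entirely, so prepending $\beta$ to $Z$ neither creates an occurrence of them nor of their relevant substrings beyond what $Z$ already provides. Once that is nailed down, the symmetric-difference count is immediate. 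I would also double-check the edge cases (small $d$, and the requirement that the ambient alphabet be large enough to supply $\beta$ and $a_{k+1}$), but these are routine once the host string is fixed; since the theorem only asserts existence of one family it suffices to fix, say, $k=2$ and a $4$- or $5$-letter alphabet.
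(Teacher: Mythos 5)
Your proposal is correct and follows essentially the same route as the paper, which obtains this theorem simply by combining Theorem~\ref{lem:extention_total} and Corollary~\ref{lem:reduction_total}; your explicit embedding of the append example into a host string and the verification that the $\Theta(d)$ new MAWs are not cancelled by the simultaneous left deletion is, if anything, more detail than the paper itself supplies for the lower bound. The only nit is an off-by-one: with $|Z|=d$ the adjacent windows of $T'=\beta Z\alpha$ have length $d+1$ rather than $d$, so take $|Z|=d-1$ to match the statement exactly (this does not affect the $\Omega(d)$ claim).
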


This theorem improves Crochemore et al.'s upper bound
for $|\MAW(T[i..i+d-1]) \bigtriangleup \MAW(T[i+1..i+d])| \in O(\sigma d)$
for any alphabet of size $\sigma \in \omega(1)$.

\subsection{Total changes of MAWs when sliding a window on a string}
\label{sec:total}
In this subsection, we consider the total number of changes of MAWs
when sliding the window of length $d$ from the beginning of $T$ to the end of $T$.
We denote the total number of changes of MAWs by
$\mathcal{S}(T, d) = \sum_{i=1}^{n-d}|\MAW(T[i..i+d-1]) \bigtriangleup \MAW(T[i+1..i+d])|$.
The following lemma is known:
\begin{lemma}[\cite{CrochemoreHKMPR20}] \label{lem:sigma_slide_MAW_Crochemore}
  For a string $T$ of length $n > d$ over an alphabet $\Sigma$ of size $\sigma$,
  $\mathcal{S}(T, d) \in O(\sigma n)$.
\end{lemma}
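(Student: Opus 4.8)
The plan is to reduce $\mathcal{S}(T,d)$ to the per-operation estimates of Section~\ref{sec:append} and then to tame the one quantity that does not sum up trivially --- the number of Type-3 MAWs created by an append --- by an amortized argument on the length of the longest repeating suffix of the window.

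First I would record the decomposition. Each shift $T[i..i+d-1]\to T[i+1..i+d]$ factors through $T[i+1..i+d-1]$ as a deletion of the leftmost character followed by an appending of a rightmost character, so the triangle inequality for $\bigtriangleup$ gives
\[
  \mathcal{S}(T,d)\;\le\;\sum_{i}\bigl|\MAW(T[i..i+d-1])\bigtriangleup\MAW(T[i+1..i+d-1])\bigr|\;+\;\sum_{i}\bigl|\MAW(T[i+1..i+d-1])\bigtriangleup\MAW(T[i+1..i+d])\bigr|.
\]
Because deleting the leftmost character of a string is the same as appending a character to its reversal, and $\MAW$ commutes with reversal, it suffices to bound the second (append) sum by $O(\sigma n)$; applying the same bound to $T$ reversed handles the first sum. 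For a single append of a character $\alpha$ to a window $W$ of length $d-1$, Lemma~\ref{lem:extention_dec} says exactly one MAW disappears, Lemma~\ref{type1max} contributes at most one Type-1 MAW, and Lemma~\ref{type2max} contributes at most $\sigma$ Type-2 MAWs; over the $n-d$ appends these three sources already sum to only $O(\sigma n)$, so everything reduces to bounding $\sum_{\text{appends}}|\typethree|$.

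For this I would sharpen Lemma~\ref{type3max} in the spirit of Crochemore et al.'s analysis: every Type-3 MAW created by appending $\alpha$ to $W$ has the form $au\alpha$, where $u$ is the suffix of $W$ of some length $\ell$ with $s_\alpha<\ell\le|\LSuf(W)|$; here $\LSuf(W)$ denotes the longest repeating suffix of $W$ and $s_\alpha$ the longest suffix of $W$ having an internal occurrence immediately followed by $\alpha$. The bound $\ell\le|\LSuf(W)|$ holds because $au=w[..|w|-1]$ must occur in $W$ and cannot occur only as a suffix (else $au\alpha$ would be present in $W\alpha$), so $u$ has an internal occurrence; the bound $\ell>s_\alpha$ is precisely the failure of $u\alpha=w[2..]$ to occur in $W$. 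Since $u$ is pinned down by $\ell$ and $\alpha$ is fixed, only the first character $a$ is free, so $|\typethree|\le\sigma\bigl(|\LSuf(W)|-s_\alpha\bigr)$. The amortization then rests on two facts about the longest-repeating-suffix length as the window travels: appending $\alpha$ to $W$ makes it exactly $s_\alpha+1$ (every nonempty suffix of $W\alpha$ ends with $\alpha$, and $v\alpha$ repeats in $W\alpha$ iff the suffix $v$ of $W$ has an internal occurrence followed by $\alpha$), and deleting the leftmost character cannot increase it (every repeating suffix of the shorter string is a repeating suffix of the longer one). Rewriting the per-append bound as $\sigma\bigl(|\LSuf(W)|-(|\LSuf(W\alpha)|-1)\bigr)$ and telescoping along the whole alternating sequence of deletions and appends --- where the deletions only decrease the quantity --- yields
\[
  \sum_{\text{appends}}\bigl(|\LSuf(W)|-|\LSuf(W\alpha)|+1\bigr)\;\le\;(n-d)+|\LSuf(T[1..d])|\;\le\;(n-d)+d\;=\;n,
\]
so $\sum_{\text{appends}}|\typethree|\le\sigma n$ and hence $\mathcal{S}(T,d)\in O(\sigma n)$.

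The step I expect to be the main obstacle is the structural claim identifying the Type-3 MAWs with the left extensions of suffixes of length in $(s_\alpha,|\LSuf(W)|]$, together with the equality $|\LSuf(W\alpha)|=s_\alpha+1$: both need a short but delicate case analysis, in particular around runs (e.g.\ when $W$ ends with a block of $\alpha$'s) and around the degenerate cases $s_\alpha=0$ and $\alpha\notin W$, where one must fix a convention such as $s_\alpha=-1$. Once these are settled, the triangle-inequality reduction, the reversal symmetry, and the telescoping are all routine.
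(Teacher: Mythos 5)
Your argument is correct. Be aware, though, that the paper gives no proof of this lemma: it is imported as a known result from Crochemore et al.~\cite{CrochemoreHKMPR20}, so there is no internal proof to compare against. What you have written is essentially a reconstruction of the original argument: your per-append bound on $|\typethree|$ via suffixes of length in $(s_\alpha, |\LSuf(W)|]$ is precisely the $(s_i - s_\alpha)(\sigma-1)$ term of the per-step bound that the paper quotes from~\cite{CrochemoreHKMPR20} in Section~\ref{sec:maw_slide_intro}, and the telescoping of $\sum_i (s_i - s_\alpha)$ via the identity $|\LSuf(W\alpha)| = s_\alpha + 1$ together with the monotonicity of the longest repeating suffix under left deletion is the standard amortization. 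The individual steps --- the triangle inequality for $\bigtriangleup$ through the intermediate window of length $d-1$, the reversal symmetry that disposes of the deletion half, the identification of Type-3 MAWs with left extensions of repeating suffixes, and the convention $s_\alpha = -1$ when $\alpha$ is absent from $W$ --- all check out.
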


The aim of this subsection is to give a more rigorous bound for $\mathcal{S}(T, d)$.
We first show that the above bound is tight under some conditions.
\begin{lemma}\label{lem:sigma_slide_MAW}
  The upper bound of Lemma~\ref{lem:sigma_slide_MAW_Crochemore}
  is tight when $\sigma \le d$ and $n-d \in \Omega(n)$.
\end{lemma}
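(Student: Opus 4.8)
The plan is to witness the $\Omega(\sigma n)$ lower bound by an explicit periodic family in the regime $\sigma\le d$. Since the upper bound of Lemma~\ref{lem:sigma_slide_MAW_Crochemore} holds for every window length, it suffices to produce the family for the single value $d=\sigma$ (so that $\sigma\le d$). Over the alphabet $\Sigma=\{a_1,\dots,a_\sigma\}$ I would take $T=(a_1a_2\cdots a_\sigma a_1)^m$, of length $n=m(\sigma+1)$; choosing $m\ge 2$ makes $n-d=n-\sigma\ge n/2\in\Omega(n)$. The trailing $a_1$ gives $T$ period $\sigma+1$ rather than $\sigma$, so that a length-$\sigma$ window never contains a whole period and keeps losing and regaining characters as it slides.

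The first step is to describe the window precisely. Put $W_i=T[i..i+\sigma-1]$ and argue modulo $\sigma+1$: for a residue $k$ with $3\le k\le\sigma+1$, the window $W_i$ with $i\equiv k$ occupies the $\sigma$ positions whose residues are $\{0,1,\dots,\sigma\}\setminus\{k-1\}$, so $W_i$ contains exactly $\Sigma\setminus\{a_{k-1}\}$ (with $a_1$ occurring twice), and the character appended when passing to $W_{i+1}$, namely $T[i+\sigma]$, lies at residue $k-1$ and hence equals $a_{k-1}$. Thus among every $\sigma+1$ consecutive shifts there are $\sigma-1$ \emph{good} shifts, meaning shifts at which $W_i$ omits some character $c$ and $W_{i+1}=W_i[2..\sigma]\,c$.

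The second step is a lower bound on the symmetric difference at a good shift. Because $c$ is absent from $W_i$, it is absent from $W_i[2..\sigma]$ as well, so $c$ occurs in $W_{i+1}$ only at the last position. Hence, for each character $x$ occurring in $W_{i+1}$, the word $w=cx$ is absent from $W_{i+1}$, while $w[2..]=x$ and $w[..|w|-1]=c$ both occur in $W_{i+1}$; by the criterion (A)--(C), $w\in\MAW(W_{i+1})$. On the other hand $w\notin\MAW(W_i)$, since its proper substring $w[..|w|-1]=c$ does not occur in $W_i$, so condition (C) fails there. As $W_{i+1}$ has at least $\sigma-1$ distinct characters, this already gives $|\MAW(W_i)\bigtriangleup\MAW(W_{i+1})|\ge\sigma-1$ (symmetrically, the words $xc'$ with $x$ occurring in $W_i$ and $c'$ the character missing from $W_{i+1}$ contribute another $\sigma-1$ elements, but one side suffices). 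Summing over the at least $(m-1)(\sigma-1)$ good shifts,
\[
  \mathcal{S}(T,\sigma)\ \ge\ (m-1)(\sigma-1)^2\ \in\ \Omega(\sigma n)
\]
since $m=n/(\sigma+1)$; together with Lemma~\ref{lem:sigma_slide_MAW_Crochemore} this yields $\mathcal{S}(T,\sigma)\in\Theta(\sigma n)$, i.e. the bound is tight.

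The part I expect to require the most care is the modular bookkeeping in the first step: identifying exactly which windows omit a character, checking that the character entering at the right end is that omitted character (rather than some character that has merely exited on the left), and recording the small but necessary fact that this character has no other occurrence in $W_{i+1}$. Granting that, the per-shift lower bound is an immediate application of the MAW criterion (A)--(C), and the rest is arithmetic.
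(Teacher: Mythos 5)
Your construction and the per-shift count are correct for the specific parameter choice $d=\sigma$: the modular bookkeeping checks out (the window $W_i$ with $i\equiv k$, $3\le k\le\sigma+1$, omits exactly $a_{k-1}$, which is precisely the character appended next), and the $\sigma-1$ words $cx$ are indeed in $\MAW(W_{i+1})\setminus\MAW(W_i)$ by the criterion (A)--(C), giving $\mathcal{S}(T,\sigma)\in\Omega(\sigma n)$. However, there is a genuine gap in the opening reduction. The claim ``since the upper bound holds for every window length, it suffices to produce the family for the single value $d=\sigma$'' does not follow: tightness is a statement about each admissible parameter pair $(\sigma,d)$, and exhibiting a matching lower bound at $d=\sigma$ says nothing about whether $\Omega(\sigma n)$ is achievable for, say, $d=100\sigma$. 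The lemma is later combined with Lemma~\ref{lem:d_slide_MAW} to assert in Theorem~\ref{thm:total_slide_MAW} that $O(\min\{d,\sigma\}n)$ is tight whenever $n-d\in\Omega(n)$, which requires a witness family for \emph{every} $d\ge\sigma$, not just $d=\sigma$. Your particular string $(a_1\cdots a_\sigma a_1)^m$ does not extend: once $d\ge\sigma+1$ every window contains a full period, no character is ever absent from a window, and the ``good shift'' mechanism disappears entirely.

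The paper closes exactly this gap by stretching the period to exceed $d$: it sets $k$ with $(k-1)(\sigma-1)\le d<k(\sigma-1)$ and uses $U=a_1\alpha^{k-1}a_2\alpha^{k-1}\cdots a_{\sigma-1}\alpha^{k-1}$ repeated, so that each non-$\alpha$ character still occurs at most once per window. The trade-off is that the special shifts now occur only once every $\Theta(k)$ positions, which the paper compensates by showing that each such shift changes not $\Omega(\sigma)$ but $\Omega((\sigma-1)k)=\Omega(d)$ MAWs (the words $b\alpha^{\ell}\beta$ for $0\le\ell\le k-1$ and the various $b$ occurring in the window), yielding $\Omega\bigl(d\cdot(n-d)/k\bigr)=\Omega(\sigma n)$. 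To repair your proof you would either need to carry out this generalization for arbitrary $d\ge\sigma$, or explicitly weaken the statement you are proving to tightness at $d=\sigma$ only --- which would no longer support the theorem that follows.
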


\begin{proof}
\tanote*{updated the binary case in detail}{%
  If $\sigma = 2$, the lower bound
  $\mathcal{S}(T, d) \in \Omega(n-d) = \Omega(\sigma (n-d))$ is obtained
  by string $T = (\mathtt{ab})^{n/2}$ since 
$\MAW((\mathtt{ab})^{d/2}) \bigtriangleup \MAW((\mathtt{ba})^{d/2}) = \{ (\mathtt{ab})^{d/2} ,(\mathtt{ba})^{d/2}\}$.
}
  In the sequel, we consider the case where $\sigma \ge 3$.
  Let $k$ be the integer with $(k-1)(\sigma-1) \le d < k(\sigma-1)$.
  Note that $k \geq 2$ since $\sigma \le d$.
  Let $\Sigma = \{a_1, a_2, \cdots, a_\sigma\}$ and $\alpha = a_\sigma$.
  We consider a string $T' = U^e + U[..m]$ where
  $U = a_1 \alpha^{k-1} a_2 \alpha^{k-1} \dots a_{\sigma-1} \alpha^{k-1}$,
  $e = \lfloor \frac{n}{k(\sigma-1)} \rfloor$, and $m = n\bmod k(\sigma-1)$.
  Let $c$ be a character that is not equal to $\alpha$.
  For any two distinct occurrences $i_1, i_2 \in \occ_{T'}(c)$ for $c$,
  $|i_1 - i_2| \ge k(\sigma-1) > d$.
  Thus, any character $c \ne \alpha$ is absent from at least one of two adjacent windows
  $T'[i..i+d-1]$ and $T'[i+1..i+d]$ for every $1 \le i \le n-d$.

  Now we consider a window $W = T'[p-d..p-1]$ where
  $d + 1 \le p \le n$ and $T'[p] = \beta \ne \alpha$.
  Let $\Pi = \{b_1, b_2, \cdots, b_{\pi-1}, \alpha\} \subset \Sigma \setminus \{\beta\}$
  be the set of all $\pi$ characters that occur in $W$.
  Without loss of generality, we assume that the current window is $W = \alpha^r b_1\alpha^{k-1}b_2\alpha^{k-1}\cdots b_{\pi-1}\alpha^{k-1}$
  and the next window is $W' = W[2..]\beta$
  where $r = d\bmod k$~(see Figure~\ref{fig:MAW_total}).
  For any character $b \in \Pi \setminus \{b_1, b_{\pi-1}, \alpha\}$,
  $b \alpha^{\ell} \beta$ is in $\MAW(W')\bigtriangleup\MAW(W)$ for every $0 \le \ell \le k-1$.
  If $r > 0$,
  $b_1 \alpha^{\ell} \beta$ is also in $\MAW(W')\bigtriangleup\MAW(W)$ for every $0 \le \ell \le k-1$.
  Otherwise,
  $b_1$ is in $\MAW(W')\bigtriangleup\MAW(W)$ and
  $b_1\alpha^{\ell} b_2$ is in $\MAW(W')\bigtriangleup\MAW(W)$ for every $0 \le \ell \le k-2$
  since $b_1$ is absent from $W'$.
  Also, $\beta$ is in $\MAW(W')\bigtriangleup\MAW(W)$ and
  $b_{\pi-1} \alpha^{\ell} \beta$ is in $\MAW(W')\bigtriangleup\MAW(W)$
  for every $0 \le \ell \le k-2$.
  Thus, at least $(\pi-3) k + k + 1 + (k-1) = (\pi-1) k$ MAWs are in $\MAW(W')\bigtriangleup\MAW(W)$.
  Additionally, the number $\pi-1$ of distinct characters which occur in $W$
  and are not equal to $\alpha$
  is at least $\lfloor (\sigma-1)/2 \rfloor$, since
  $k\lfloor(\sigma-1)/2\rfloor \le k(\sigma-1)/2 = (k - k/2)(\sigma-1) \le (k - 1)(\sigma-1) \le d$ where the second inequality follows from $k \geq 2$．
  Therefore, $|\MAW(W')\bigtriangleup\MAW(W)| \ge (\pi-1) k \ge \lfloor(\sigma-1)/2\rfloor k \in \Omega(\sigma k) = \Omega(d)$.
  The number of pairs of two adjacent windows $W$ and $W'$
  where $|\MAW(W')\bigtriangleup\MAW(W)| \in \Omega(d)$
  is $\Theta((n-d)/k)$.
  Therefore, we obtain
  $\mathcal{S}(T', d) \in \Omega(d(n-d)/k) = \Omega(\sigma(n-d)) = \Omega(\sigma n)$
  since $n-d \in \Omega(n)$.
  \begin{figure}[tb]
      \centerline{\includegraphics[width=0.9\linewidth]{./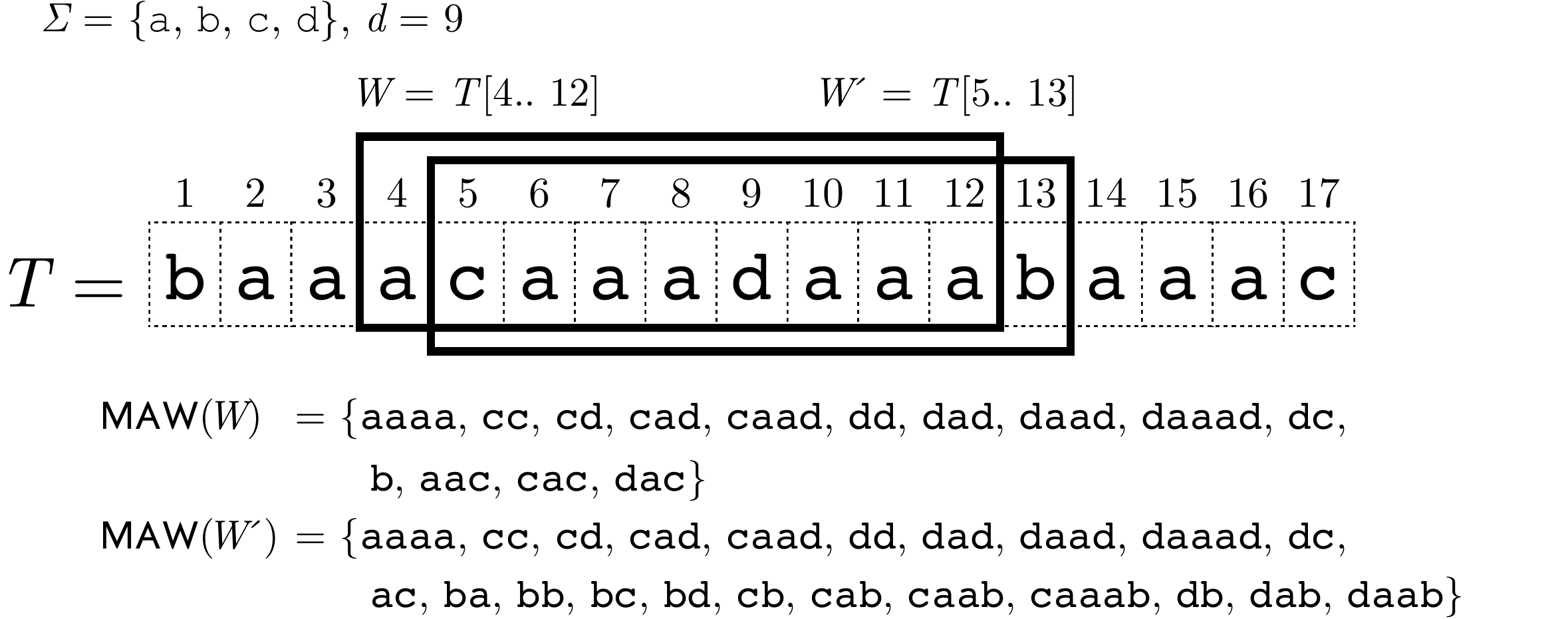}}
      \caption{
        Illustration of examples of MAWs for adjacent two windows.
        In this example, $\sigma = 4, d = 9 $, and $k = 4$.
        The size of the symmetric difference of $\MAW(W)$ and $\MAW(W')$ is
        $|\MAW(W) \bigtriangleup \MAW(W')| =$
        $|\{\mathtt{b},$ $\mathtt{aac},$ $\mathtt{cac},$ $\mathtt{dac},$ $\mathtt{ac},$
        $\mathtt{ba},$ $\mathtt{bb},$ $\mathtt{bc},$ $\mathtt{bd},$
        $\mathtt{cb},$ $\mathtt{cab},$ $\mathtt{caab},$ $\mathtt{caaab},$
        $\mathtt{db},$ $\mathtt{dab},$ $\mathtt{daab}\}| = 16$.
      }\label{fig:MAW_total}
  \end{figure}
  %
%
\end{proof}

Next, we consider the case where $\sigma \ge d+1$.
\begin{lemma}\label{lem:d_slide_MAW}
  For a string $T$ of length $n > d$ over an alphabet $\Sigma$ of size $\sigma$,
  $\mathcal{S}(T, d) \in O(d(n-d))$,
  and this upper bound is tight when $\sigma \ge d+1$.
\end{lemma}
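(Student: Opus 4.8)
The statement has two halves — a universal upper bound $\mathcal{S}(T,d)\in O(d(n-d))$ and a matching lower bound valid whenever $\sigma\ge d+1$ — which I would prove separately, the first by summing a known per-shift estimate and the second by an explicit periodic construction.

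For the upper bound I would simply sum the single-shift estimate already established. Every window of length $d$ contains at most $d$ distinct characters, so $\sigma_{i,j}\le d$; hence Theorem~\ref{lem:extention_total} bounds one append by $\sigma_{i,j}+d+1\le 2d+1$ and Corollary~\ref{lem:reduction_total} bounds one deletion by the same, so a full shift of the window changes at most $4d+2\in O(d)$ MAWs (this is exactly Theorem~\ref{col:increase_MAW_at_one_slide}). Summing over the $n-d$ shifts gives $\mathcal{S}(T,d)=\sum_{i=1}^{n-d}|\MAW(T[i..i+d-1])\bigtriangleup\MAW(T[i+1..i+d])|\le(n-d)(4d+2)\in O(d(n-d))$, with no hypothesis on $\sigma$.

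For tightness when $\sigma\ge d+1$, I would exhibit one explicit family. Fix $d+1$ pairwise distinct characters $a_1,\dots,a_{d+1}\in\Sigma$ (possible since $\sigma\ge d+1$) and let $T$ be the length-$n$ prefix of the periodic string $(a_1a_2\cdots a_{d+1})^\omega$. The structural fact I would establish first, by a residue-modulo-$(d+1)$ computation, is that because the period $d+1$ exceeds the window length $d$, every window $W_i=T[i..i+d-1]$ is a block of $d$ consecutive letters of the cyclic pattern and therefore has $d$ pairwise distinct characters; moreover the unique letter of the pattern absent from the next window $W_{i+1}=T[i+1..i+d]$ is precisely $c:=T[i]=W_i[1]$, and $c$ occurs in $W_i$ only at position $1$. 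Given this, for each of the $d$ characters $b$ occurring in $W_i$ the length-$2$ string $bc$ is absent from $W_i$ (nothing precedes the unique occurrence of $c$) while both $b$ and $c$ occur in $W_i$; hence $bc\in\MAW(W_i)$ by the characterization (A)--(C). Since $c$ does not occur at all in $W_{i+1}$, condition (B) fails for $bc$ with respect to $W_{i+1}$, so $bc\notin\MAW(W_{i+1})$. Therefore $\{\,bc : b \text{ occurs in } W_i\,\}\subseteq\MAW(W_i)\setminus\MAW(W_{i+1})$ has size $d$, so $|\MAW(W_i)\bigtriangleup\MAW(W_{i+1})|\ge d$ for every $1\le i\le n-d$, and $\mathcal{S}(T,d)\ge d(n-d)\in\Omega(d(n-d))$, matching the upper bound.

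I do not expect a genuine obstacle: the only delicate point is the bookkeeping of which residue class each window covers and which one $W_{i+1}$ omits, together with the routine check that the $d$ strings $bc$ are genuine MAWs of $W_i$. One small subtlety to state explicitly in the write-up is that one of these $d$ strings is $cc$ itself (taking $b=c$); it is still a MAW for $W_i$, since $c$ occurs in $W_i$ but $cc$ does not, so the count $d$ is unaffected.
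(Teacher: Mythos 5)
Your proof is correct and takes essentially the same approach as the paper: the upper bound is the per-shift $O(d)$ bound of Theorem~\ref{col:increase_MAW_at_one_slide} summed over the $n-d$ shifts, and the lower bound uses the very same periodic witness $(a_1a_2\cdots a_{d+1})^e\cdots$. The only (harmless) difference is that you certify just the $d$ length-two MAWs $bc$ that vanish at each shift, whereas the paper enumerates the full symmetric difference of size $4d-2$; both yield $\Omega(d)$ per shift and hence $\Omega(d(n-d))$ in total.
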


\begin{proof}
  By Theorem~\ref{col:increase_MAW_at_one_slide},
  it is clear that $\mathcal{S}(T, d) \in O(d(n-d))$.
  Next, we show that there is a string $T'$ of length $n > d$ such that
  $\mathcal{S}(T', d) \in \Omega(d(n-d))$
  for any integer $d$ with $1 \le d \le \sigma-1$.
  Let $\Sigma = \{a_1, a_2, \cdots, a_\sigma\}$.
  We consider a string $T' = (a_1 a_2 \cdots a_{d+1})^e a_1 a_2 \cdots a_k$
  where $e = \lfloor n/(d+1) \rfloor$ and $k = n\bmod (d+1)$.
  For each window $W = T'[i..i+d-1]$ in $T'$,
  $W$ consists of distinct $d$ characters,
  and the character $T'[i+d]$ that is the right neighbor of $W$
  is different from any characters that occur in $W$.
  Without loss of generality , we assume that the current window is
  $W = a_1 a_2 \cdots a_d$
  and the next window is $W' = W[2..]a_{d+1} = a_{2} \cdots a_{d+1}$.
  Then, $|\MAW(W') \bigtriangleup \MAW(W)| = \
  |\{{a_1}^2 , a_{d+1}, a_2a_1, \ldots , a_da_1, a_1a_3, \ldots , a_1a_d \} \cup \{a_1, {a_{d+1}}^2 , a_{d+1}a_2, \ldots , a_{d+1}{a_d},  a_2a_{d+1}, \ldots , a_{d-1}a_{d+1} \}| = 4d-2 \in \Omega(d)$.
  Therefore, $\mathcal{S}(T', d) = \Omega{(d(n-d))}$.
\end{proof}

The main result of this section follows from the above lemmas:
\begin{theorem}\label{thm:total_slide_MAW}
  For a string $T$ of length $n > d$ over an alphabet $\Sigma$ of size $\sigma$,
  $\mathcal{S}(T, d) \in O(\min\{d, \sigma\} n)$.
  This upper bound is tight when $n-d \in \Omega(n)$.
\end{theorem}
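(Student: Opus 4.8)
The plan is to combine the per-step bounds with the two regime-specific lower bounds already established. For the upper bound, I would proceed by a case analysis on whether $\sigma \le d$ or $\sigma \ge d+1$. If $\sigma \ge d+1$, then Theorem~\ref{col:increase_MAW_at_one_slide} gives $|\MAW(T[i..i+d-1]) \bigtriangleup \MAW(T[i+1..i+d])| \in O(d)$ for each of the $n-d$ slides, so summing yields $\mathcal{S}(T,d) \in O(d(n-d)) \subseteq O(dn) = O(\min\{d,\sigma\}n)$ since $d = \min\{d,\sigma\}$ in this regime. If instead $\sigma \le d$, I would invoke Lemma~\ref{lem:sigma_slide_MAW_Crochemore}, which gives $\mathcal{S}(T,d) \in O(\sigma n) = O(\min\{d,\sigma\}n)$ since $\sigma = \min\{d,\sigma\}$ here. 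Taking the two cases together, $\mathcal{S}(T,d) \in O(\min\{d,\sigma\}n)$ unconditionally.

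For the lower bound under the assumption $n - d \in \Omega(n)$, I would again split on the same two regimes. When $\sigma \le d$, Lemma~\ref{lem:sigma_slide_MAW} exhibits a family of strings (namely the periodic string built from $U = a_1\alpha^{k-1}\cdots a_{\sigma-1}\alpha^{k-1}$, or $(\mathtt{ab})^{n/2}$ in the binary case) achieving $\mathcal{S}(T,d) \in \Omega(\sigma n)$; since $\sigma = \min\{d,\sigma\}$ in this regime, this matches the upper bound. When $\sigma \ge d+1$, Lemma~\ref{lem:d_slide_MAW} provides the string $T' = (a_1\cdots a_{d+1})^e a_1 \cdots a_k$ achieving $\mathcal{S}(T',d) \in \Omega(d(n-d)) = \Omega(dn)$ using $n-d \in \Omega(n)$; since $d = \min\{d,\sigma\}$ here, this again matches. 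In both regimes the lower bound meets the upper bound, so the bound $O(\min\{d,\sigma\}n)$ is tight.

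The only mild subtlety is bookkeeping the boundary between the regimes: the quantity $\min\{d,\sigma\}$ is what lets the two separately-proven bounds be stated as a single clean expression, so I would make sure the case split $\sigma \le d$ versus $\sigma \ge d+1$ is exhaustive and that in each case $\min\{d,\sigma\}$ simplifies to the value used by the corresponding cited lemma. There is essentially no new combinatorial work here — the theorem is a packaging of Lemmas~\ref{lem:sigma_slide_MAW_Crochemore}, \ref{lem:sigma_slide_MAW}, and \ref{lem:d_slide_MAW} together with Theorem~\ref{col:increase_MAW_at_one_slide}. Accordingly, the main (and really the only) obstacle was already dispatched in proving those lemmas, in particular the delicate construction and MAW-counting in the proof of Lemma~\ref{lem:sigma_slide_MAW}; the present proof is a short assembly step.
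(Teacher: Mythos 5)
Your proposal is correct and is exactly the argument the paper intends: the theorem is stated as an immediate consequence of Lemmas~\ref{lem:sigma_slide_MAW_Crochemore}, \ref{lem:sigma_slide_MAW}, and \ref{lem:d_slide_MAW} together with Theorem~\ref{col:increase_MAW_at_one_slide}, split on the regimes $\sigma \le d$ and $\sigma \ge d+1$ so that $\min\{d,\sigma\}$ reduces to the bound proved in each case. Your write-up just makes explicit the assembly step the paper leaves implicit.
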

We remark that $n-d \in \Omega(n)$ covers most interesting cases
for the window length $d$, since the value of $d$ can range
from $O(1)$ to $cn$ for any $0 < c < 1$.

\section{Tighter bounds for binary alphabets}
In this section we consider the case where $\sigma' = 2$,
i.e. when both the current sliding window $S = T[i..i+d-1]$ and
the next window $S\alpha = T[i..i+d]$ extended
with a new character $\alpha = T[i+d]$
consist of two distinct characters.
The goal of this section is to show that
when $\sigma' = 2$,
there exists a tighter upper bound for the number of changes of
MAWs than the general case with $\sigma' \geq 3$.

In what follows, let us denote by $\BSigma = \{ \mathtt{0}, \mathtt{1} \}$ the binary alphabet,
and assume without loss of generality that we append the new character $\alpha = \mathtt{0}$
to the window $S$ of length $d$ and obtain the extended window $S \alpha = S\mathtt{0}$.

As a warm up, we begin with the two following lemmas which show that
at most 3 MAWs can change in the cases where $d = 1$ and $d = 2$
for any binary strings.

\begin{lemma} \label{lem:case_d=1}
  For any string $S$ over $\BSigma$ with $|S| = d = 1$,
  $|\MAW(S) \bigtriangleup \MAW(S\mathtt{0})| \leq 3$.
\end{lemma}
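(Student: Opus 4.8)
The plan is to handle Lemma~\ref{lem:case_d=1} by pure case analysis, since with $d = 1$ there are only two possibilities for the window $S$ over the binary alphabet $\BSigma = \{\mathtt{0}, \mathtt{1}\}$, namely $S = \mathtt{0}$ or $S = \mathtt{1}$. In each case I would simply compute $\MAW(S)$ and $\MAW(S\mathtt{0})$ directly from the defining conditions (A), (B), (C), and then count the symmetric difference.

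Concretely, if $S = \mathtt{0}$, then $S\mathtt{0} = \mathtt{00}$. Here $\MAW(\mathtt{0}) = \{\mathtt{1}\}$ (the only absent character, and it is a MAW since $\varepsilon$ occurs), while $\MAW(\mathtt{00}) = \{\mathtt{1}\}$ as well (now $\mathtt{0}$ and $\mathtt{00}$ both occur, so the only length-$1$ MAW is $\mathtt{1}$, and no length-$\geq 2$ MAW exists because the only substrings available are powers of $\mathtt{0}$). Thus the symmetric difference is empty and certainly has size $\leq 3$. If instead $S = \mathtt{1}$, then $S\mathtt{0} = \mathtt{10}$. Here $\MAW(\mathtt{1}) = \{\mathtt{0}\}$, whereas $\MAW(\mathtt{10}) = \{\mathtt{01}, \mathtt{00}, \mathtt{11}\}$: the characters $\mathtt{0}$ and $\mathtt{1}$ now both occur, so there is no length-$1$ MAW; and each of $\mathtt{01}, \mathtt{00}, \mathtt{11}$ is absent while both of its length-$1$ proper substrings occur. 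So $\MAW(\mathtt{1}) \bigtriangleup \MAW(\mathtt{10}) = \{\mathtt{0}, \mathtt{01}, \mathtt{00}, \mathtt{11}\}$, which has size $4$ --- and here I need to be careful, because that would violate the claimed bound of $3$.

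I expect this apparent discrepancy to be the main obstacle, and the resolution is the standing assumption $\sigma' = 2$ declared at the start of this section: the lemma is implicitly restricted to the case where both $S$ and $S\alpha$ consist of exactly two distinct characters, equivalently where appending $\alpha = \mathtt{0}$ does not increase the alphabet size of the window. With $d = 1$ this forces $S$ itself to already contain a $\mathtt{0}$, so the only admissible window is $S = \mathtt{0}$, and then the computation above gives $|\MAW(S) \bigtriangleup \MAW(S\mathtt{0})| = 0 \leq 3$. Alternatively, if one does allow $S = \mathtt{1}$ (reading $\sigma'$ as the alphabet size of $S\alpha$ only), one verifies directly that $|\MAW(\mathtt{1}) \bigtriangleup \MAW(\mathtt{10})| = 4 > 3$, which would contradict the statement; hence the lemma must be read under the stricter convention, and I would make that explicit at the top of the proof.

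So the proof is short: state that under $\sigma' = 2$ the only binary string of length $1$ over an alphabet appearing in the window is (after renaming) $S = \mathtt{0}$, exhibit $\MAW(\mathtt{0}) = \MAW(\mathtt{00}) = \{\mathtt{1}\}$, and conclude $|\MAW(S) \bigtriangleup \MAW(S\mathtt{0})| = 0 \leq 3$. If the authors instead intend the weaker reading, the proof would enumerate both cases $S \in \{\mathtt{0}, \mathtt{1}\}$ and note that the maximum of $0$ and $4$ is attained --- but since $4 > 3$, I am confident the intended reading is the restricted one, and I would phrase the proof accordingly, perhaps remarking that the excluded case $S = \mathtt{1}$ is precisely where the alphabet grows and is therefore covered by Theorem~\ref{lem:extention_total} rather than by this section's binary analysis.
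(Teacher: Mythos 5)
Your overall strategy (exhaustive case analysis over $S \in \{\mathtt{0}, \mathtt{1}\}$) is exactly the paper's, but your computation of the MAW sets is wrong, and the error propagates into a false ``counterexample'' that leads you to misread the lemma. The mistake is that for any nonempty string, the one-character extension of its longest unary run is always a MAW: $\mathtt{11}$ is absent from $\mathtt{1}$ while both of its proper substrings ($\mathtt{1}$ and $\varepsilon$) occur, so $\MAW(\mathtt{1}) = \{\mathtt{0}, \mathtt{11}\}$, not $\{\mathtt{0}\}$. Likewise $\MAW(\mathtt{0}) = \{\mathtt{1}, \mathtt{00}\}$ and $\MAW(\mathtt{00}) = \{\mathtt{1}, \mathtt{000}\}$, not $\{\mathtt{1}\}$ in both cases (your claim that ``no length-$\geq 2$ MAW exists'' for a unary string is false precisely because of $\mathtt{0}^{k+1}$). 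With the correct sets, the case $S = \mathtt{1}$ gives $\MAW(\mathtt{1}) \bigtriangleup \MAW(\mathtt{10}) = \{\mathtt{0}, \mathtt{11}\} \bigtriangleup \{\mathtt{00}, \mathtt{01}, \mathtt{11}\} = \{\mathtt{0}, \mathtt{00}, \mathtt{01}\}$: the string $\mathtt{11}$ lies in \emph{both} sets and cancels, so the symmetric difference has size $3$, not $4$, and the bound holds. The case $S = \mathtt{0}$ gives $\{\mathtt{00}, \mathtt{000}\}$ of size $2$, not $0$.

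Consequently, your second and third paragraphs --- arguing that the lemma must be implicitly restricted to windows already containing a $\mathtt{0}$, and excluding $S = \mathtt{1}$ --- are an unnecessary repair of a problem that does not exist. The paper proves the lemma for both values of $S$ with no restriction (indeed the case $S = \mathtt{1}$ is the one that attains the bound $3$, which is why the corollary at the end of the section needs $\max\{3, d\}$ rather than $d$). You should simply recompute the four MAW sets correctly, list both symmetric differences, and observe that their sizes are $2$ and $3$.
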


\begin{proof}
  For each $S \in \{\mathtt{0}, \mathtt{1}\}$ of length $1$,
  \begin{eqnarray*}
    \MAW(\mathtt{0}) \bigtriangleup \MAW(\mathtt{00}) & = & \{\underline{\mathtt{00}}, \mathtt{000}\}, \\
    \MAW(\mathtt{1}) \bigtriangleup \MAW(\mathtt{10}) & = & \{\underline{\mathtt{0}}, \mathtt{00}, \mathtt{01}\}, 
  \end{eqnarray*}
  where the underlined strings are those in $\MAW(S) \setminus \MAW(S\mathtt{0})$
  and the strings without underlines are those in $\MAW(S\mathtt{0}) \setminus \MAW(S)$.
  Thus the lemma holds.
\end{proof}

\begin{lemma} \label{lem:case_d=2}
  For any string $S$ over $\BSigma$ with $|S| = d = 2$,
  $|\MAW(S) \bigtriangleup \MAW(S\mathtt{0})| \leq 3$.
\end{lemma}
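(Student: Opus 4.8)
The plan is to proceed by a straightforward exhaustive case analysis over all binary strings $S$ of length $2$, exactly as was done for $d=1$ in Lemma~\ref{lem:case_d=1}, since there are only four such strings. Since we append $\alpha = \mathtt{0}$ by assumption, I would compute $\MAW(S)$ and $\MAW(S\mathtt{0})$ for each $S \in \{\mathtt{00}, \mathtt{01}, \mathtt{10}, \mathtt{11}\}$ and check directly that the symmetric difference has size at most $3$. For instance, for $S = \mathtt{11}$ one expects $\MAW(\mathtt{11}) = \{\mathtt{0}, \mathtt{111}\}$ and $\MAW(\mathtt{110}) = \{\mathtt{00}, \mathtt{010}, \mathtt{111}, \mathtt{0110}\}$ (or similar), and one simply reads off the symmetric difference; the cases $\mathtt{00}$, $\mathtt{01}$, $\mathtt{10}$ are analogous and equally short. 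Each MAW set for a string of length $2$ or $3$ is tiny, so every computation is a one-line verification.

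An alternative, slightly more conceptual route would be to invoke the structural lemmas already proved: Lemma~\ref{lem:extention_dec} gives exactly one deleted MAW, so it suffices to show $|\typeone| + |\typetwo| + |\typethree| \leq 2$. Since $d = 2$, Lemma~\ref{type3max} already gives $|\typethree| \leq d - 1 = 1$, and Lemma~\ref{type1max} gives $|\typeone| \leq 1$. The only thing left is to control $|\typetwo|$ and the interactions: when $\sigma_{i,j+1} = \sigma_{i,j} = 2$ (i.e. $\mathtt{0}$ already occurs in $S$), Lemma~\ref{MAW1and2_collide} would apply if $d \geq 3$, but here $d = 2$, so that lemma is not directly available and a small ad hoc argument is needed; when $\mathtt{0}$ does not occur in $S$ (i.e. $S = \mathtt{11}$), then $\typetwo = \emptyset$ because every MAW in $\typetwo$ ends with a character occurring in $S$, forcing $|\typeone| + |\typethree| \leq 2$. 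I expect the exhaustive-enumeration route to be cleaner and less error-prone given how few strings are involved, so I would present that, reserving the structural observations only as sanity checks.

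The main (and essentially only) obstacle is bookkeeping: one must be careful to list the MAWs correctly, in particular not to forget the length-$1$ MAW $\mathtt{0}$ when it is absent from $S$, and to correctly identify which MAWs survive the extension. Because the strings are so short there is no real combinatorial difficulty — the risk is purely clerical. I would lay out a compact display, mirroring the format of Lemma~\ref{lem:case_d=1}, showing each of the four symmetric differences $\MAW(S) \bigtriangleup \MAW(S\mathtt{0})$ with the deleted MAW underlined, and then conclude that in every case the size is at most $3$, which proves the lemma.
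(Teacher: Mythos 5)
Your plan is exactly the paper's proof: an exhaustive enumeration of the four binary strings of length~$2$, displayed in the same format as Lemma~\ref{lem:case_d=1} with the unique deleted MAW underlined. One caution before you write it out: your sample set for $S=\mathtt{11}$ is wrong --- $\mathtt{010}$ and $\mathtt{0110}$ are not MAWs for $\mathtt{110}$ because $\mathtt{01}$ and $\mathtt{011}$ do not occur in $\mathtt{110}$; the correct set is $\MAW(\mathtt{110})=\{\mathtt{00},\mathtt{01},\mathtt{111}\}$, giving $\MAW(\mathtt{11})\bigtriangleup\MAW(\mathtt{110})=\{\mathtt{0},\mathtt{00},\mathtt{01}\}$ of size $3$, whereas your sets as written would yield a symmetric difference of size $4$ and appear to refute the lemma --- so the ``purely clerical'' risk you identified is real and must be resolved in the final write-up.
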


\begin{proof}
  For each $S \in \{ \mathtt{00}, \mathtt{01}, \mathtt{10}, \mathtt{11}\}$ of length $2$,
  \begin{eqnarray*}
    \MAW(\mathtt{00}) \bigtriangleup \MAW(\mathtt{000}) & = & \{\underline{\mathtt{000}}, \mathtt{0000}\}, \\
    \MAW(\mathtt{01}) \bigtriangleup \MAW(\mathtt{010}) & = & \{\underline{\mathtt{10}}, \mathtt{101}\}, \\
    \MAW(\mathtt{10}) \bigtriangleup \MAW(\mathtt{100}) & = & \{\underline{\mathtt{00}}, \mathtt{000}\}, \\
    \MAW(\mathtt{11}) \bigtriangleup \MAW(\mathtt{110}) & = & \{\underline{\mathtt{0}}, \mathtt{00}, \mathtt{01} \},
  \end{eqnarray*}
  where the underlined strings are those in $\MAW(S) \setminus \MAW(S\mathtt{0})$
  and the strings without underlines are those in $\MAW(S\mathtt{0}) \setminus \MAW(S)$.
  Thus the lemma holds.
\end{proof}

We move onto the case where $d \geq 3$.
Our first observation is that 
it is sufficient to consider the case that $S$ is not unary.
For any $d$, it is clear that
$|\MAW(\mathtt{0}^{d}) \bigtriangleup \MAW(\mathtt{0}^{d+1})| = 2$.
Now let us consider $\mathtt{1}^d$ in the next lemma.

\begin{lemma} \label{lem:at_least_a_zero}
  For any $d \geq 3$ let $V = \mathtt{1}^d$.
  Then, there exists another string $S$ of length $d$ over $\BSigma$
  such that $S[k] = \mathtt{0}$ for some $1 \leq k \leq d$
  and $|\MAW(V) \bigtriangleup \MAW(V\mathtt{0})| \leq |\MAW(S) \bigtriangleup \MAW(S\mathtt{0})|$.
\end{lemma}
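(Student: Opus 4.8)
The plan is to exhibit an explicit string $S$ of length $d$ containing at least one $\mathtt{0}$ and then directly compare the two symmetric differences. First I would compute $\MAW(V) \bigtriangleup \MAW(V\mathtt{0})$ for $V = \mathtt{1}^d$: appending $\mathtt{0}$ to $\mathtt{1}^d$ introduces the new character $\mathtt{0}$, so by Lemma~\ref{lem:extention_dec} exactly one MAW is deleted (this is $\mathtt{0}$ itself, since $\mathtt{0}$ was a MAW for $\mathtt{1}^d$ but occurs in $\mathtt{1}^d\mathtt{0}$), and the MAWs added are $\mathtt{00}$ (the Type-1 MAW $\alpha^2$), $\mathtt{10}$ (Type 2 or 3, since $\mathtt{0}$ now occurs), and $\mathtt{1}^{d}\mathtt{0}$ — wait, I should double-check: $\mathtt{1}^k$ occurs in $\mathtt{1}^d\mathtt{0}$ for all $k \le d$, and $\mathtt{1}^d\mathtt{0}$ does not occur but $\mathtt{1}^d$ and $\mathtt{1}^{d-1}\mathtt{0}$ do, so $\mathtt{1}^d\mathtt{0}$ is a MAW. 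So $\MAW(V)\bigtriangleup\MAW(V\mathtt{0})$ has size exactly $4$. Hence it suffices to produce one non-unary $S$ of length $d$ with $|\MAW(S)\bigtriangleup\MAW(S\mathtt{0})| \ge 4$.

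The natural candidate is $S = \mathtt{1}^{d-1}\mathtt{0}$ or $S = \mathtt{0}\mathtt{1}^{d-1}$ or some string like $\mathtt{1}^{d-2}\mathtt{01}$; I would pick whichever makes the count cleanest. For instance, take $S = \mathtt{1}^{d-1}\mathtt{0}$ and append $\mathtt{0}$ to get $S\mathtt{0} = \mathtt{1}^{d-1}\mathtt{00}$. Then I would enumerate: the deleted MAW (exactly one by Lemma~\ref{lem:extention_dec}) is $\mathtt{00}$; the added MAWs include $\mathtt{000}$ (Type 1, $\alpha^3$ with $k=1$), $\mathtt{010}$-type strings and in particular a Type-3 MAW of the form $\mathtt{1}^{j}\mathtt{00}$ or $\mathtt{01}^{?}\mathtt{0}$ and at least one more, so that the total is at least $4$. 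The key routine step is verifying that at least three new MAWs are created, which follows because appending $\mathtt{0}$ to $\mathtt{1}^{d-1}\mathtt{0}$ makes $\mathtt{00}$ occur, creating the Type-1 MAW $\mathtt{000}$, plus it makes new contexts for $\mathtt{0}$ so that longer strings ending in $\mathtt{00}$ (previously absent because $\mathtt{00}$ was absent) become MAWs once their proper substrings appear.

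Alternatively, a cleaner route avoids case analysis on $S$ entirely: observe that for \emph{any} $S$ over $\BSigma$ of length $d \ge 3$ that contains at least one $\mathtt{0}$, one always has $|\MAW(S)\bigtriangleup\MAW(S\mathtt{0})| \ge 4$? That is false in general (e.g.\ $S = \mathtt{0}^d$ gives only $2$), so the statement genuinely requires choosing $S$ with some structure — not just "contains a $\mathtt{0}$" but something stronger. So I would stick with the explicit construction and simply list the elements of the two symmetric differences, underlining the deleted one in each, exactly in the style of Lemmas~\ref{lem:case_d=1} and~\ref{lem:case_d=2}. Concretely I would write out $\MAW(\mathtt{1}^{d-1}\mathtt{0}) \bigtriangleup \MAW(\mathtt{1}^{d-1}\mathtt{00}) = \{\underline{\mathtt{00}}, \mathtt{000}, \mathtt{100}, \mathtt{1}^{d-1}\mathtt{00}\}$ (or whatever the correct list turns out to be after checking small cases like $d=3,4$), giving $4 \ge 4 = |\MAW(V)\bigtriangleup\MAW(V\mathtt{0})|$.

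The main obstacle is pinning down the exact set of added MAWs for the chosen $S$: I need to be careful about which strings of the form $\mathtt{1}^{j}\mathtt{00}$, $\mathtt{01}^{j}\mathtt{0}$, etc., are genuinely MAWs (absent, but with both length-$(|w|-1)$ prefix and suffix present) for $S\mathtt{0}$ but not for $S$, and to make sure I have not miscounted by confusing Type-2 and Type-3 contributions. Once the list is correct for general $d \ge 3$, the comparison with the size-$4$ set $\MAW(\mathtt{1}^d)\bigtriangleup\MAW(\mathtt{1}^d\mathtt{0})$ is immediate, and the lemma follows by taking this $S$ as the witness.
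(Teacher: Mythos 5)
Your overall strategy (exhibit one explicit non-unary witness $S$ and compare counts) is the same as the paper's, but both of your concrete computations are wrong, and the witness you actually develop does not prove the lemma. First, the baseline: $|\MAW(\mathtt{1}^d)\bigtriangleup\MAW(\mathtt{1}^d\mathtt{0})| = 3$, not $4$. The string $\mathtt{1}^d\mathtt{0}$ is the entire extended window, so it \emph{occurs} in $V\mathtt{0}$ and is not a MAW; likewise $\mathtt{10}$ occurs as a suffix of $V\mathtt{0}$ and is not a MAW. The correct sets are $\MAW(V)\setminus\MAW(V\mathtt{0}) = \{\mathtt{0}\}$ and $\MAW(V\mathtt{0})\setminus\MAW(V) = \{\mathtt{00},\mathtt{01}\}$. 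This error happens to make your target harder ($\geq 4$ instead of $\geq 3$), so it would not by itself sink the proof, but the second error does.

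The witness $S = \mathtt{1}^{d-1}\mathtt{0}$ fails. One computes $\MAW(\mathtt{1}^{d-1}\mathtt{0}) = \{\mathtt{1}^{d},\mathtt{00},\mathtt{01}\}$ and $\MAW(\mathtt{1}^{d-1}\mathtt{00}) = \{\mathtt{1}^{d},\mathtt{000},\mathtt{01}\}$, so the symmetric difference is $\{\mathtt{00},\mathtt{000}\}$, of size $2 < 3$. Your claimed extra MAWs do not materialize: $\mathtt{100}$ occurs in $\mathtt{1}^{d-1}\mathtt{00}$ as a suffix, any $\mathtt{01}^{j}\mathtt{0}$ requires $\mathtt{01}$ to occur in the window (it does not), and $\mathtt{1}^{j}\mathtt{00}$ is never a MAW here. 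Intuitively, because $\mathtt{0}$ occurs only at the very end of $S$, appending another $\mathtt{0}$ creates no new left contexts, so $\typetwo = \typethree = \emptyset$. The fix is the other candidate you mention in passing but do not pursue: $S = \mathtt{0}\mathtt{1}^{d-1}$ (the paper's choice), for which $\MAW(S\mathtt{0})\setminus\MAW(S) = \{\mathtt{0}\mathtt{1}^{k}\mathtt{0} \mid 1 \leq k \leq d-2\}\cup\{\mathtt{101}\}$ and $\MAW(S)\setminus\MAW(S\mathtt{0}) = \{\mathtt{10}\}$, giving a symmetric difference of size $d \geq 3$. Until you commit to that witness and verify its MAW sets, the proof has a genuine gap.
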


\begin{proof}
  Since $V = \mathtt{1}^d$,
  $\MAW(V) \setminus \MAW(V\mathtt{0}) = \{ \mathtt{0} \}$.
  Also, $\MAW(V\mathtt{0}) \setminus \MAW(V) = \{ \mathtt{00}, \mathtt{01} \}$.
  Thus $|\MAW(V) \bigtriangleup \MAW(V\mathtt{0} )| = 3$ for any $d \geq 1$.

  Let $S = \mathtt{01}^{d-1}$ and $S\mathtt{0} = \mathtt{01}^{d-1}\mathtt{0}$ with $d \geq 3$.
  Then,
  $\MAW(S\mathtt{0}) \setminus \MAW(S) = \{ \mathtt{01}^k\mathtt{0} \mid 1 \leq k \leq d-2\} \cup \{ \mathtt{101} \}$
  and $\MAW(S\mathtt{0}) \setminus \MAW(S) = \{ \mathtt{10} \}$.
  Thus we have $|\MAW(S) \bigtriangleup \MAW(S\mathtt{0})| \geq d \geq 3$.
\end{proof}

According to Lemmas~\ref{lem:case_d=1}, ~\ref{lem:case_d=2} and ~\ref{lem:at_least_a_zero},
in what follows we focus on the case where
$d \geq 3$ and the current window $S = T[i..i+d-1]$ contains at least one $\mathtt{0}$.
The latter condition implies that
we focus on the case where the new character $\alpha = \mathtt{0}$
already occurs in the window $S$.

As in the case of non-binary alphabets,
we analyze the numbers of added Type-1/Type-2/Type-3 MAWs 
in $\typeone$/$\typetwo$/$\typethree$ for binary strings.
Recall that in the current context, for any $S = T[i..i+d-1]$,
a MAW $w$  in $\MAW(S\mathtt{0}) \setminus \MAW(S)$ is said to be of:
\begin{itemize}
  \item Type 1 if neither $w[2..]$ nor $w[..|w|-1]$ occurs in $S$;
  \item Type 2 if $w[2..]$ occurs in $S$ but $w[..|w|-1]$ does not occur in $S$;
  \item Type 3 if $w[2..]$ is does not occur in $S$ but $w[..|w|-1]$ occurs in $S$.
\end{itemize}

We first show the upper bound for the size of $\typethree$
in the case where $\sigma' = 2$.

\begin{lemma}\label{lem:bi_type3}
  For any binary string $S$ over $\BSigma$
  such that $|S| = d \geq 3$, $|\typethree| \leq d-2$.
\end{lemma}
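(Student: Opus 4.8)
The plan is to sharpen the generic bound $|\typethree| \le d-1$ from Lemma~\ref{type3max} by exhibiting, in the binary setting with at least one $\mathtt{0}$ in $S$, one more forbidden value in the range of the injection $f$ constructed there. Recall $f$ maps a Type-3 MAW $w \in \typethree$ (for $S\mathtt{0}$, so $w$ ends with $\mathtt{0}$) to the ending position in $S = T[i..i+d-1]$ of the leftmost occurrence of $w[..|w|-1]$; the proof of Lemma~\ref{type3max} already shows $f$ is an injection into $[i,j-1]$ (here $j = i+d-1$), giving $d-1$. So it suffices to find one position $q \in [i,j-1]$ that is not hit by $f$.

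First I would dispose of the case $|w| = 1$: a length-1 MAW for $S\mathtt{0}$ is a character absent from $S\mathtt{0}$, but over $\BSigma$ the only candidates are $\mathtt{0}$ and $\mathtt{1}$, and $\mathtt{0}$ occurs in $S\mathtt{0}$ by hypothesis, so a length-1 Type-3 MAW would have to be $\mathtt{1}$ — but $\mathtt{1} \notin \MAW(S\mathtt{0}) \setminus \MAW(S)$ since if $\mathtt{1}$ is absent from $S\mathtt{0}$ it is already absent from $S$, hence already a MAW for $S$, contradicting $w \notin \MAW(S)$. Thus every $w \in \typethree$ has $|w| \ge 2$ and can be written $w = a u \mathtt{0}$ with $a \in \BSigma$, $u \in \BSigma^\ast$, where $au$ (the prefix $w[..|w|-1]$) occurs in $S$ but $u\mathtt{0}$ (the suffix $w[2..]$) does not occur in $S$ — this last fact being the defining property of Type 3.

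The heart of the argument is to identify the extra blocked position. Let $z$ be the last character of $S = T[i..j]$, i.e. $z = T[j]$. I claim no $w \in \typethree$ has $f(w) = i$, which would already eliminate one position; but more robustly, consider the position $j-1$ together with the structure at the end of $S$. The key point: since every $w = au\mathtt{0} \in \typethree$ satisfies that $au$ occurs in $S$ but $au\mathtt{0}$ does not occur in $S\mathtt{0}$, and $au$ has length $\ge 2$, the first character $a$ of $w$ is constrained — in particular if $T[j] = \mathtt{0}$, then the occurrence of $au$ ending at $f(w)$ is followed in $S$ by something other than $\mathtt{0}$ unless $f(w)$ is near the end; I would show that the two "boundary" positions available to short MAWs beginning with $\mathtt{0}$ versus $\mathtt{1}$ cannot both be used, collapsing the count by one. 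Concretely, I expect to split on $T[j] = \mathtt{0}$ vs.\ $T[j] = \mathtt{1}$ and argue in each subcase that one of positions $i$, $j-1$, or the ending position of the maximal run at the right end of $S$ is unattainable as an $f$-value, because attaining it would force either $w$ to occur in $S\mathtt{0}$ or $w[2..]$ to occur in $S$, contradicting membership in $\typethree$.

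The main obstacle will be making the "one blocked position" argument uniform across the internal structure of $S$ — the generic injection proof is clean, but squeezing out the extra $-1$ requires a genuinely binary-specific case analysis (the two characters $\mathtt{0},\mathtt{1}$ interact tightly via runs), and I anticipate needing to treat separately whether $S$ ends in a run of $\mathtt{0}$'s or of $\mathtt{1}$'s, and whether $|w|$ is small (length $2$, so $w \in \{\mathtt{00}, \mathtt{10}\}$) where the prefix/suffix conditions degenerate. Once the extra forbidden position is pinned down in all subcases, $|\typethree| \le (d-1) - 1 = d-2$ follows immediately from injectivity of $f$ into a set of size $d-2$.
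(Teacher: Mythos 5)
Your overall strategy matches the paper's: reuse the injection $f$ from Lemma~\ref{type3max} and show that one additional position in its range $[i,j-1]$ is unattainable in the binary case. You even name the correct position in passing (``I claim no $w \in \typethree$ has $f(w) = i$''). But you never prove that claim---or any other blocking claim. Everything after that sentence is a statement of intent (``I would show\dots'', ``I expect to split\dots'', ``I anticipate needing\dots''), and the plan you do sketch points in the wrong direction: you propose to analyze position $j-1$, the maximal run at the \emph{right} end of $S$, and a case split on $T[j]$. The position that is actually blocked is at the \emph{left} end, namely $i$, and the relevant case analysis is on the two possible candidates for a Type-3 MAW mapped there, not on the last character of $S$.

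Concretely, the missing argument is this. A MAW $w \in \typethree$ with $f(w) = i$ has $w[..|w|-1]$ of length $1$ ending at position $i$, so $|w| = 2$; since every Type-3 MAW for $S\mathtt{0}$ ends with the appended character, $w \in \{\mathtt{00}, \mathtt{10}\}$. If $w = \mathtt{00}$ then $S[1] = \mathtt{0}$, and either $\mathtt{00}$ occurs in $S$ (hence is not absent from $S\mathtt{0}$), or $\mathtt{00}$ is already a MAW for $S$ (its proper substrings $\varepsilon$ and $\mathtt{0}$ occur in $S$); in both cases $\mathtt{00} \notin \MAW(S\mathtt{0}) \setminus \MAW(S)$. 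If $w = \mathtt{10}$ then $S[1] = \mathtt{1}$, and since $S\mathtt{0}$ begins with $\mathtt{1}$ and ends with $\mathtt{0}$, some $\mathtt{1}$ in $S\mathtt{0}$ is immediately followed by a $\mathtt{0}$, so $\mathtt{10}$ occurs in $S\mathtt{0}$ and cannot be a MAW for it. Hence $f$ maps $\typethree$ injectively into $[i+1, j-1]$, a set of size $d-2$. Without this step (or a completed substitute), your write-up establishes only the generic bound $d-1$.
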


\begin{proof}
Recall the proof for Lemma~\ref{type3max}.
There, we proved that each MAW $w$ of Type 3 for
any non-binary string $R\alpha = T[i..i+d] = T[i..j+1]$
is mapped by an injection $f$ to a distinct position of $T[i..j]$
in the range $[i, j-1]$,
or alternatively to a distinct position of $R$ in range $[1, d-1]$.
This showed $|\typethree| \leq d-1$ for $\sigma' \geq 3$.

Here we show that the range of such an injection $f$ is $[2, d-1]$
for any binary string $S$ with $\sigma' = 2$.
Since the appended character is $\alpha = \mathtt{0}$,
and since the candidate $x$ for the MAW of Type 3 which should be mapped to
the first position in $S$ is of length $2$,
the candidate $x$ has to be either $\mathtt{00}$ or $\mathtt{10}$.
\begin{enumerate}
\item[(1)] If $x = \mathtt{00}$, then $S[1] = \mathtt{0}$.
  If $\mathtt{00}$ does not occur in $S$ (see also the top picture of Figure~\ref{fig:binarymaw_figure1}),
  then $\mathtt{00}$ is already a MAW for $S$
  (i.e. $\mathtt{00} \in \MAW(S)$).
  Thus $\mathtt{00} \notin \MAW(S\mathtt{0}) \setminus \MAW(S)$ in this case.
  Otherwise ($\mathtt{00}$ occurs in $S$), then clearly $\mathtt{00}$ is not a MAW for $S\mathtt{0}$
  (see also the middle picture of Figure~\ref{fig:binarymaw_figure1}).
  \item[(2)] If $x = \mathtt{10}$, then $S[1] = \mathtt{1}$.
    However, since the appended character is $\mathtt{0}$,
    $\mathtt{10}$ must occur somewhere in $S\mathtt{0}$
    (see also the bottom picture of Figure~\ref{fig:binarymaw_figure1}).
    Thus $\mathtt{10}$ is not a MAW for $S\mathtt{0}$.
\end{enumerate}
Hence, the first position of $S$ cannot be
assigned to any MAW of Type 3 for $S\mathtt{0}$,
leading to $|\typethree| \leq d-2$ for any binary string $S$ of length $d \geq 3$.
\end{proof}

In other words, Lemma~\ref{lem:bi_type3} shows that in the binary case
with $\sigma' = 2$,
the maximum number of added Type-3 MAWs is 1 less than in the case
with $\sigma' \geq 3$.

\begin{figure}[tb]
 \centering
 \includegraphics[keepaspectratio, scale=0.35]
      {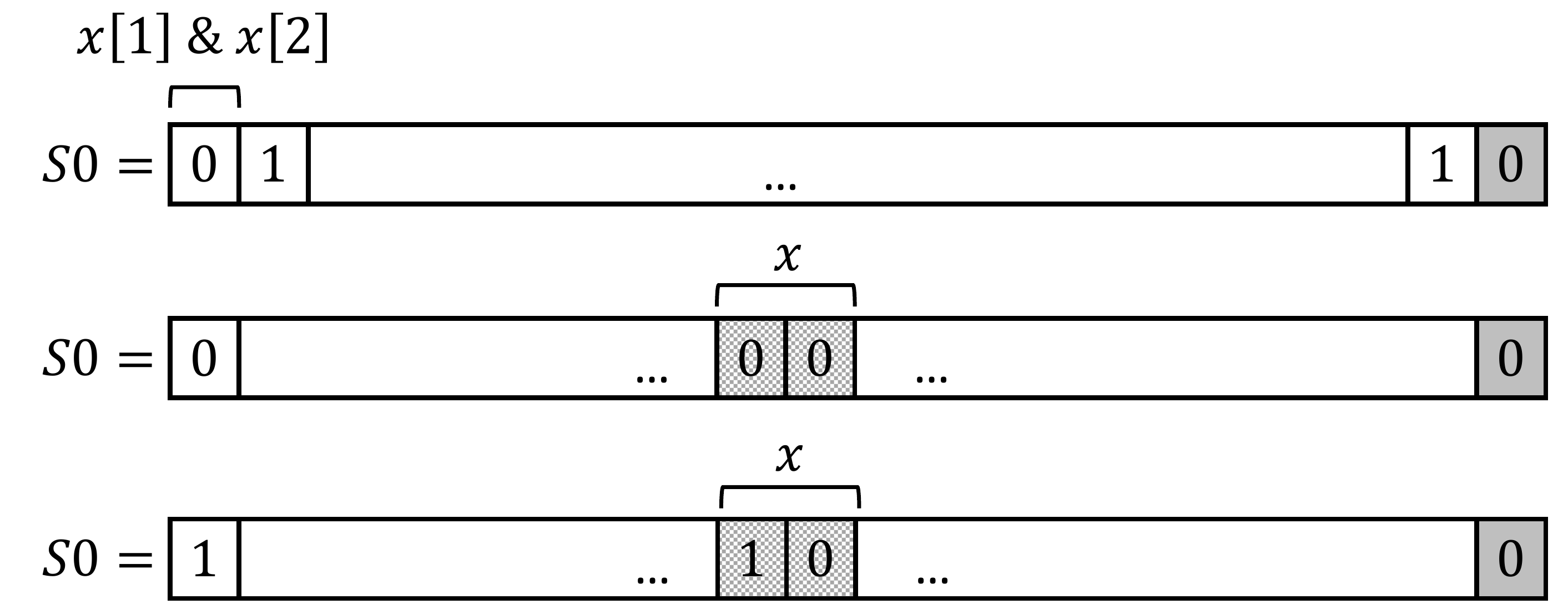}
      \caption{Characteristics of Type-3 MAWs in the binary case with $\sigma' = 2$,
      where the rightmost $\mathtt{0}$ in gray is the new appended character in each picture.}
 \label{fig:binarymaw_figure1}
\end{figure}

  Next, we consider the total number of added Type-1/Type-2 MAWs.
  
From Lemma~\ref{MAW1and2_collide}, the next corollary holds. 
  \begin{corollary}
  \label{MAW1and2_binary_collide}
  $|\typeone|+|\typetwo| \leq 2$ on any binary string $S$ when the character to be appended already occurs in $S$.
  \end{corollary}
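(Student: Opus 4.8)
The statement to prove is Corollary~\ref{MAW1and2_binary_collide}, which asserts $|\typeone|+|\typetwo| \leq 2$ for any binary string $S$ when the appended character already occurs in $S$.

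The plan is to derive this directly from Lemma~\ref{MAW1and2_collide} by specializing to the binary alphabet. Recall that Lemma~\ref{MAW1and2_collide} states that for any window $T[i..j]$ with $d = j - i + 1 \geq 3$ and $\sigma_{i,j} = \sigma_{i,j+1}$ (i.e.\ the appended character $T[j+1]$ already occurs in $T[i..j]$), we have $|\typeone| + |\typetwo| \leq \sigma_{i,j}$. In the current binary setting we are writing $S = T[i..i+d-1]$ and appending $\alpha = \mathtt{0}$, and the hypothesis ``$\alpha$ already occurs in $S$'' is exactly the condition $\sigma_{i,j} = \sigma_{i,j+1}$. Moreover, by the standing assumptions of this section (following Lemmas~\ref{lem:case_d=1}, \ref{lem:case_d=2}, and \ref{lem:at_least_a_zero}) we only need the case $d \geq 3$ and $S$ containing at least one $\mathtt{0}$ — precisely the situation in which Lemma~\ref{MAW1and2_collide} applies. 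Since $\sigma' = 2$ means $\sigma_{i,j} = 2$, plugging this value into Lemma~\ref{MAW1and2_collide} yields $|\typeone| + |\typetwo| \leq 2$, which is the claim.

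So the proof is essentially one line: invoke Lemma~\ref{MAW1and2_collide} with $\sigma_{i,j} = 2$. I would, however, be slightly careful to state explicitly why the hypotheses of that lemma are met: (i) $d \geq 3$ holds because the cases $d = 1, 2$ and unary $S$ have already been dispatched, so within this part of the section $S$ is a non-unary binary string of length at least $3$; and (ii) the condition $\sigma_{i,j} = \sigma_{i,j+1}$ holds because the appended character $\mathtt{0}$ already occurs in $S$, so appending it does not introduce a new alphabet symbol. There is no real obstacle here — the content was already extracted in Lemma~\ref{MAW1and2_collide}; the corollary is just the binary instantiation. If anything, the only thing to watch is making sure the notation $\sigma'$ used in this section is identified with $\sigma_{i,j}$ from the earlier general lemma, so that the substitution $\sigma_{i,j} = 2$ is justified.

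\begin{proof}
  Since we are in the case $d \geq 3$ and $S = T[i..i+d-1]$ contains at least one $\mathtt{0}$, the appended character $\alpha = \mathtt{0}$ already occurs in $S$, hence $\sigma_{i,i+d-1} = \sigma_{i,i+d}$. Also, as $\sigma' = 2$, we have $\sigma_{i,i+d-1} = 2$. Therefore Lemma~\ref{MAW1and2_collide} applies and gives $|\typeone| + |\typetwo| \leq \sigma_{i,i+d-1} = 2$.
\end{proof}
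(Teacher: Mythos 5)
Your proof is correct and takes exactly the route the paper does: the paper states this corollary as an immediate consequence of Lemma~\ref{MAW1and2_collide} with $\sigma_{i,j}=2$, which is precisely your one-line instantiation. Your extra care in checking the hypotheses ($d \geq 3$ from the earlier case analysis, and $\sigma_{i,j}=\sigma_{i,j+1}$ from the assumption that the appended character already occurs in $S$) is a reasonable addition but does not change the argument.
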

  



A direct consequence of Lemma~\ref{lem:bi_type3} and Lemma \ref{MAW1and2_collide}
is an upper bound for the added MAWs
$|\typeone|+|\typetwo|+|\typethree| \leq d$ for any binary string $S0$
with $|S| = d \geq 3$.
In what follows, we further reduce this upper bound to $|\typeone|+|\typetwo|+|\typethree| \leq d-1$. For this purpose, we introduce the next lemma:

\begin{lemma}\label{lem:bi_newupper}
  For any binary string $S$ over $\BSigma$
  such that $|S| = d \geq 3$,
  $|\typetwo|$ is at most the number of occurrences of $\mathtt{0}$ in $S[1..d-1]$,
  and $|\typethree|$ is at most the number of occurrences of $\mathtt{1}$ in $S[3..d]$.
%
\end{lemma}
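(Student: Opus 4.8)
The plan is to prove the two claimed bounds separately, handling $\typetwo$ first and then $\typethree$ by a symmetric idea, but with the binary asymmetry (the appended character is $\mathtt{0}$) broken in each case.

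For $\typetwo$: recall from Lemma~\ref{type2max} and its proof that each MAW $w = a u \mathtt{0}$ in $\typetwo$ has the property that $w[2..] = u\mathtt{0}$ occurs in $S$ while $w[..|w|-1] = au$ does not occur in $S$ (so $au$ occurs in $S\mathtt{0}$ only as a suffix). I would like to inject $\typetwo$ into the set of occurrences of $\mathtt{0}$ in $S[1..d-1]$. The natural map sends $w = au\mathtt{0}$ to the ending position of some occurrence of $u\mathtt{0}$ in $S$; since $au$ does not occur in $S$ but $u\mathtt{0}$ does, and $au$ occurs as a suffix of $S\mathtt{0}$, the occurrence of $u\mathtt{0}$ inside $S$ cannot be a suffix occurrence of $S$ extended — more carefully, the occurrence of $u\mathtt{0}$ in $S$ ends at some position $p \le d-1$ (it cannot end at $d$, because then $S[d] = \mathtt{0}$ and $u\mathtt{0}$ would be a suffix of $S$; but we would need $au$ to occur in $S$, namely $S[\cdot\cdot d-1]$'s relevant suffix, which forces $au$ into $S$, a contradiction — I need to check the boundary carefully). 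So $S[p] = \mathtt{0}$ with $p \le d-1$, landing us in $S[1..d-1]$. For injectivity: if $w_1 = a_1 u_1 \mathtt{0}$ and $w_2 = a_2 u_2 \mathtt{0}$ both map to $p$, then (taking $|w_1| \ge |w_2|$) $u_2\mathtt{0}$ is a suffix of $u_1 \mathtt{0}$ ending at $p$; combined with the last-characters-distinct argument from~\cite{CrochemoreHKMPR20} — no wait, here the last characters are all $\mathtt{0}$ so that argument is unavailable; instead I use that $a_1 u_1$ occurs in $S\mathtt{0}$ only as a suffix, and $u_2 \mathtt{0}$ being an internal factor forces $a_2 u_2 \mathtt{0}$'s absence to be violated, much as in the proof of Lemma~\ref{type3max}. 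This yields $|\typetwo| \le \#\{p \in [1,d-1] : S[p] = \mathtt{0}\}$.

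For $\typethree$: this is genuinely symmetric to the $\typetwo$ argument under reversal, but the appended character $\mathtt{0}$ is not symmetric, so the bound becomes the count of $\mathtt{1}$'s rather than $\mathtt{0}$'s. Reuse the injection $f$ from the proof of Lemma~\ref{type3max} sending $w \in \typethree$ to the ending position of the leftmost occurrence of $w[..|w|-1]$ in $S$; by Lemma~\ref{lem:bi_type3} this position lies in $[2,d-1]$. Now I want to show that position $p$ in the image satisfies $S[p] = \mathtt{1}$ and moreover $p \in [3,d]$ rather than $[2,d-1]$ — that is, I claim $f$ actually maps into $\{p \in [3,d]: S[p] = \mathtt{1}\}$; combined with $|\typethree| \le d-2$ something must give the refinement. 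The key point: for $w = au\mathtt{0} \in \typethree$, $w[..|w|-1] = au$ occurs in $S$, and $au$ ends in some character; since $w$ is absent from $S\mathtt{0}$, the occurrence of $au$ in $S$ ending at $p$ cannot be followed by $\mathtt{0}$, and in the binary case that means $S[p+1] = \mathtt{1}$ or $p = d$; separately $w[2..] = u\mathtt{0}$ does not occur in $S$. The character at position $p$ is $b := w[|w|-1]$, the last character of $u$ (or of $au$). I must argue $b = \mathtt{1}$: if $b = \mathtt{0}$, then $w[|w|-1]w[|w|] = \mathtt{00}$, so $u$ ends in $\mathtt{0}$, and I'd derive that $u\mathtt{0}$, a block of $\mathtt{0}$'s times... this needs the run-structure argument analogous to Lemma~\ref{MAW1and2_collide}. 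So the $b = \mathtt{1}$ claim is where the real work sits.

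The main obstacle will be pinning down, in the $\typethree$ half, exactly why the matched position carries a $\mathtt{1}$ and why the index range shifts from $[2,d-1]$ to $[3,d]$ — the counting of $\mathtt{1}$'s in $S[3..d]$ is suspiciously specific and I expect it follows from combining (i) the leftmost-occurrence injection $f$ with range $[2,d-1]$ from Lemma~\ref{lem:bi_type3}, (ii) a shift showing the relevant character is $S[p]$ for $p$ one larger, and (iii) ruling out $S[p] = \mathtt{0}$ via a maximal-run argument as in Lemma~\ref{MAW1and2_collide}. I would first nail the $\typetwo$ bound cleanly (it is the more routine of the two), then transport the argument through reversal while carefully re-deriving every boundary condition because the append-$\mathtt{0}$ asymmetry invalidates naive symmetry, and finally verify on the concrete strings $\mathtt{01}^{d-1}\mathtt{0}$ and similar witnesses from Lemma~\ref{lem:at_least_a_zero} that the two counts are individually tight.
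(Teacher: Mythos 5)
Your overall strategy---injecting each added MAW into a character position of $S$---is the right shape and is also the paper's, but both halves have genuine gaps as written. In the $\typetwo$ half you assume every Type-2 MAW has the form $au\mathtt{0}$, i.e.\ ends with the appended character. This is false: what the definition forces is that $au$ (the MAW minus its \emph{last} letter) occurs in $S\mathtt{0}$ only as a suffix, so it is $u$ whose last character is $\mathtt{0}$, while the final letter of the MAW ranges over $\BSigma$. By the distinct-last-characters property there are at most two Type-2 MAWs, one of each form $au\mathtt{0}$ and $a'u'\mathtt{1}$, and the witness $S'=\mathtt{00}\mathtt{1}^{d-2}$ with $\typetwo=\{\mathtt{100},\mathtt{101}\}$ shows both really occur. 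Consequently your map (ending position of $w[2..]$ in $S$) sends the $\mathtt{1}$-ending MAW to a $\mathtt{1}$-position, not a $\mathtt{0}$-position, and your unresolved worry about whether the image can be $d$ is a symptom of having charged the wrong position. The paper instead charges each Type-2 MAW $aub$ to the position of the last character of $u$ inside an occurrence of $ub$ in $S$: that character is $\mathtt{0}$ because $au$ is a suffix of $S\mathtt{0}$, the position is $\le d-1$ automatically, and the two charged positions differ because the characters immediately following them are $\mathtt{0}$ and $\mathtt{1}$ respectively.

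In the $\typethree$ half you already state the key fact---the occurrence of $au$ in $S$ ending at $f(w)\in[2,d-1]$ cannot be followed by $\mathtt{0}$, hence is followed by $\mathtt{1}$ at position $f(w)+1\in[3,d]$---but you then set out to prove that $S[f(w)]$ itself equals $\mathtt{1}$ and predict that a run-structure argument as in Lemma~\ref{MAW1and2_collide} is ``where the real work sits.'' That claim is neither needed nor true in general. The complete step is simply to pass to the shifted map $w\mapsto f(w)+1$, whose injectivity is inherited from $f$ (Lemma~\ref{type3max}) and whose image, by the observation you already made, consists of $\mathtt{1}$-positions in $S[3..d]$. So the difficulty you anticipate does not exist; once the misidentification of which position carries the witness character is corrected in both halves, the argument closes along exactly the paper's lines.
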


\begin{proof}
First we consider Type-2 MAWs for $S\mathtt{0}$.
Since $S$ is a binary string, by Lemma \ref{type2max},
there are at most two MAWs in $\typetwo$.
We assume that there are two MAWs in $\typetwo$ and let $au\mathtt{0}$ and $a'\mathtt{1}$ be the two MAWs where $a, a' \in \BSigma$ and $u, u' \in \BSigma^*$.
By the definition of Type-2 MAWs, $u\mathtt{0}$ and $u'\mathtt{1}$ occur in $S$ and $u[|u|] = u'[u'] = \mathtt{0}$ since $au$ and $a'u'$ occur in $S\mathtt{0}$ as suffixes.
For any two positions $t_0$ and $t_1$ such that $S[t_0 .. t_0+|u\mathtt{0}|-1]=u\mathtt{0}$, $S[t_1 .. t_1+|u' \mathtt{1}|-1]=u'\mathtt{1}$, $t_0+|u| \neq t_1 + |u'|$. Consequently, there are at least two occurrences of $\mathtt{0}$ in $S$ since $S[t_0 + |u| -1] = \mathtt{0}$ and $S[t_1 + |u'| -1] = \mathtt{0}$.
Since $t_0 + |u\mathtt{0}| - 1 \leq d$, $|\typetwo|$ is at most the number of occurrences of $\mathtt{0}$ in $S[1..d-1]$.
  
  Second we consider Type-3 MAWs for $S\mathtt{0}$.
  let $au\mathtt{0}$ be the Type-3 MAW where $a \in \BSigma$, $u, \in \BSigma^*$ since $u\mathtt{0}$ must be a suffix of $S\mathtt{0}$.
  By the definition of Type-3 MAW,
  there has to be an occurrence of $au$ in $S$.
  Note that this occurrence has to be immediately followed by a $\mathtt{1}$
  since $au\mathtt{0}$ does not occur in $S\mathtt{0}$. 
  Thus, for each $au\mathtt{0} \in \typethree$,
  we need an occurrence of $au\mathtt{1}$ in $S$.
  Since $|au| \geq 1$, we clearly cannot use
  the first position of $S$ as the ending position of $au1$.
  Also, it follows from Lemma~\ref{lem:bi_type3} (and its proof) that
  the second position of $S$ cannot be the ending position of $au$
  for any Type-3 MAW $aub$ for $S\mathtt{0}$.
  This implies that there is no Type-3 MAW that corresponds to
  the $\mathtt{1}$ in the second position of $S$.
  Thus, the total number of Type-3 MAWs for $S\mathtt{0}$
  is upper bounded by the number of occurrences of $\mathtt{1}$ in $S[3..d]$.
%
\end{proof}

Intuitively, Lemma~\ref{lem:bi_newupper} implies that
flipping substrings $\mathtt{1}$ in $S[3..d-1]$
does not increase the total number of Type-2 and Type-3 MAWs for $S\mathtt{0}$.

\begin{lemma} \label{lem:tight_binary_added_bound}
  For any binary string $S$ over $\BSigma$ with $|S| = d \geq 3$,
  $|\typeone|+|\typetwo|+|\typethree| \leq d-1$.
\end{lemma}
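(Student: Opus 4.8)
The plan is to combine the structural information collected in Lemmas~\ref{lem:bi_type3}, \ref{MAW1and2_collide} (equivalently Corollary~\ref{MAW1and2_binary_collide}) and \ref{lem:bi_newupper}, and argue by a short case analysis on the relationship between the Type-1 MAW and the positions that ``witness'' the Type-2 and Type-3 MAWs. Recall that by Corollary~\ref{MAW1and2_binary_collide} we already have $|\typeone|+|\typetwo|\le 2$, and by Lemma~\ref{lem:bi_type3} we have $|\typethree|\le d-2$; naively summing gives only $d$, so the task is to shave off one more unit. The key new ingredient is Lemma~\ref{lem:bi_newupper}: writing $n_0$ for the number of occurrences of $\mathtt{0}$ in $S[1..d-1]$ and $n_1$ for the number of occurrences of $\mathtt{1}$ in $S[3..d]$, we have $|\typetwo|\le n_0$ and $|\typethree|\le n_1$.

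First I would dispose of the easy regimes. If $|\typetwo|=0$, then $|\typeone|\le 1$ and $|\typethree|\le d-2$ give the total $\le d-1$ immediately. Symmetrically, if $|\typethree|\le d-3$ we are done as soon as $|\typeone|+|\typetwo|\le 2$. So the remaining case is $|\typethree|=d-2$ and $|\typetwo|\ge 1$, which by Lemma~\ref{lem:bi_newupper} forces $n_1=d-2$, i.e. every one of the $d-2$ positions in $S[3..d]$ carries a $\mathtt{1}$; combined with the hypothesis (from Lemma~\ref{lem:at_least_a_zero} and the surrounding discussion) that $S$ contains at least one $\mathtt{0}$, the $\mathtt{0}$'s of $S$ can only sit in positions $1$ and $2$, so $S\in\{\mathtt{0}\mathtt{1}^{d-1},\ \mathtt{1}\mathtt{0}\mathtt{1}^{d-2},\ \mathtt{0}\mathtt{0}\mathtt{1}^{d-2}\}$. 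For each of these three explicit shapes I would directly inspect $S\mathtt{0}$: in $\mathtt{0}\mathtt{1}^{d-1}\mathtt{0}$ and $\mathtt{1}\mathtt{0}\mathtt{1}^{d-2}\mathtt{0}$ the appended $\mathtt{0}$ is preceded by a $\mathtt{1}$, so $\mathtt{0}\mathtt{0}$ is still a MAW (not newly created) while $\mathtt{1}\mathtt{0}\mathtt{0}$ cannot be Type 1 because $\mathtt{10}$ occurs — in other words $\typeone=\emptyset$, which brings the sum down to $\le d-1$; in $\mathtt{0}\mathtt{0}\mathtt{1}^{d-2}\mathtt{0}$ one checks directly (the only candidate Type-2 MAW $\mathtt{1}^{d-2}\mathtt{0}$ fails the ``$u\mathtt{0}$ occurs in $S$'' test since $\mathtt{1}^{d-2}\mathtt{0}$ has no earlier occurrence), so $|\typetwo|=0$ and we are back in the easy regime.

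I would streamline the write-up by folding the last paragraph into a cleaner dichotomy: either $|\typeone|=0$ (then $|\typeone|+|\typetwo|\le 1$ still wins since $|\typetwo|\le 2$... careful — that only gives $d$), so more precisely I would argue that $|\typeone|=1$ and $|\typethree|=d-2$ cannot hold simultaneously. When $|\typeone|=1$, by Lemma~\ref{type1max} the unique Type-1 MAW is $\mathtt{0}^{k+2}$ where $\mathtt{0}^{k+1}$ is the longest run of $\mathtt{0}$'s in $S\mathtt{0}$ and it occurs only as a suffix; in particular $S[d]=\mathtt{0}$, contradicting $n_1=d-2$ (which would require $S[d]=\mathtt{1}$). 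Hence $|\typeone|=1\Rightarrow |\typethree|\le d-3$, and in all cases $|\typeone|+|\typetwo|+|\typethree|\le \max\{1+2+(d-3),\ 0+2+(d-2)\}=d-1$. The main obstacle — and the step that needs the most care — is precisely verifying that the Type-1 MAW being present pins down $S[d]=\mathtt{0}$ and hence is incompatible with $\typethree$ attaining its maximum $d-2$; everything else is bookkeeping on top of the three preceding lemmas.
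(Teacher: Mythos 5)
Your overall strategy mirrors the paper's: reduce the problematic configuration to the single candidate string $\mathtt{00}\mathtt{1}^{d-2}$ and inspect it directly. But the inspection of that string is wrong, and it is precisely the extremal case. For $S'=\mathtt{00}\mathtt{1}^{d-2}$ one has $\typetwo=\{\mathtt{100},\mathtt{101}\}$, so $|\typetwo|=2$, not $0$: a Type-2 MAW $w$ has $w[..|w|-1]$ equal to a suffix of $S'\mathtt{0}$ occurring only there (here $\mathtt{10}$) and $w[2..]$ occurring in $S'$ (both $\mathtt{00}$ and $\mathtt{01}$ do), so your ``only candidate $\mathtt{1}^{d-2}\mathtt{0}$'' misidentifies the shape of Type-2 MAWs. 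You are therefore not ``back in the easy regime.'' The actual reason the bound holds for this string is that $|\typethree|=d-3$ rather than $d-2$ (the string $\mathtt{0}\mathtt{1}^{d-2}\mathtt{0}$ occurs in $S'\mathtt{0}$ as a suffix, so it is not absent), which is what the paper's explicit enumeration $\typethree=\{\mathtt{0}\mathtt{1}^{k}\mathtt{0}\mid 1\le k\le d-3\}$ establishes; your write-up never observes this.

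The ``cleaner dichotomy'' at the end does not repair this: $\max\{1+2+(d-3),\;0+2+(d-2)\}=d$, not $d-1$ as you assert. Your argument that $|\typeone|=1$ forces $S[d]=\mathtt{0}$ and hence $|\typethree|\le d-3$ is correct and, combined with $|\typeone|+|\typetwo|\le 2$ from Corollary~\ref{MAW1and2_binary_collide}, does close the branch $|\typeone|=1$. But the branch $|\typeone|=0$, $|\typetwo|=2$, $|\typethree|=d-2$ remains open under your dichotomy, and it is exactly the branch that forces $S=\mathtt{00}\mathtt{1}^{d-2}$ and requires the direct computation you got wrong. To fix the proof, keep your reduction to the three shapes, handle $\mathtt{0}\mathtt{1}^{d-1}$ and $\mathtt{1}\mathtt{0}\mathtt{1}^{d-2}$ via $n_0=1\Rightarrow|\typetwo|\le 1$ as you implicitly do, and for $\mathtt{00}\mathtt{1}^{d-2}$ show $|\typethree|=d-3$ (contradicting the case hypothesis $|\typethree|=d-2$) rather than $|\typetwo|=0$.
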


\begin{proof}
  \tanote*{made the proof easy}{
  $S$ is not unary due to Lemma \ref{lem:at_least_a_zero}.
  It immediately follows from Lemma~\ref{lem:bi_type3} and Corollary~\ref{MAW1and2_binary_collide} that $|\typeone|+|\typetwo|+|\typethree| \leq d$,
  and assume on the contrary that there exists a binary string $S'$ over $\BSigma$ such that $|\typeone|+|\typetwo|+|\typethree| = d$.
  Then, it has to be $|\typeone| + |\typetwo| = 2$ and $|\typethree| = d-2$, again by Lemma~\ref{lem:bi_type3} and Corollary~\ref{MAW1and2_binary_collide}.  
  Therefore, $S'[3..d]  = \mathtt{1}^{d-2}$ by Lemma \ref{lem:bi_newupper}, and $|\typeone| = 0$. Hence we must have $|\typetwo| = 2$, which leads to $S' = \mathtt{00}\mathtt{1}^{d-2}$.
  }
  Now the sets of all the added MAWs for $S'\mathtt{0} = \mathtt{00}\mathtt{1}^{d-2}\mathtt{0}$ are
  \begin{eqnarray*}
    \typeone & = & \emptyset, \\
    \typetwo & = & \{ \mathtt{100}, \mathtt{101} \}, \\
    \typethree & = & \{ \mathtt{0}\mathtt{1}^k\mathtt{0} \mid 1 \leq k \leq d-3\},
  \end{eqnarray*}
  which leads to $|\typeone|+|\typetwo|+|\typethree| = d-1$, a contradiction. Thus the lemma holds.
\end{proof}

%
%


We obtain our main theorem:

\begin{theorem}
\label{lem:bi_add_total}
For any binary string $S$ over $\BSigma$ with $|S| = d \geq 3$,
$|\MAW(S) \bigtriangleup \MAW(S\mathtt{0})| \leq d$,
and this upper bound is tight.
\end{theorem}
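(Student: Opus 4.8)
The plan is to combine the upper bound on added MAWs from Lemma~\ref{lem:tight_binary_added_bound} with the known count of deleted MAWs, and then to produce an explicit family of binary strings witnessing tightness. First I would handle the upper bound. By Lemma~\ref{lem:extention_dec}, exactly one MAW is deleted from $\MAW(S)$ when $\mathtt{0}$ is appended, so $|\MAW(S)\setminus\MAW(S\mathtt{0})| = 1$. By Lemma~\ref{lem:tight_binary_added_bound}, at most $d-1$ MAWs are added, i.e. $|\MAW(S\mathtt{0})\setminus\MAW(S)| = |\typeone|+|\typetwo|+|\typethree| \leq d-1$. Since these two sets are disjoint and their union is the symmetric difference, we get $|\MAW(S)\bigtriangleup\MAW(S\mathtt{0})| \leq 1 + (d-1) = d$. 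This part is essentially bookkeeping on top of the lemmas already proved, so I expect no difficulty here.

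The remaining work is to establish tightness, i.e. to exhibit for every $d \geq 3$ a binary string $S$ with $|\MAW(S)\bigtriangleup\MAW(S\mathtt{0})| = d$. The natural candidate, suggested already by the extremal analysis inside the proof of Lemma~\ref{lem:tight_binary_added_bound}, is $S = \mathtt{00}\mathtt{1}^{d-2}$, so that $S\mathtt{0} = \mathtt{00}\mathtt{1}^{d-2}\mathtt{0}$. For this string I would verify directly that
\[
  \MAW(S)\setminus\MAW(S\mathtt{0}) = \{\mathtt{10}\},
\]
since $\mathtt{10}$ is a MAW for $\mathtt{00}\mathtt{1}^{d-2}$ (the only maximal run of $\mathtt{0}$ before a $\mathtt{1}$ sits at the start, and $\mathtt{10}$ does not occur there) but $\mathtt{10}$ does occur in $S\mathtt{0}$. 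Then I would enumerate the added MAWs by the three types exactly as displayed in the proof of Lemma~\ref{lem:tight_binary_added_bound}: $\typeone = \emptyset$ (no new run of $\mathtt{0}$'s appears that is minimal absent), $\typetwo = \{\mathtt{100}, \mathtt{101}\}$ (both $\mathtt{00}$ and $\mathtt{01}$ newly become suffixes whose left extension by $\mathtt{1}$ is absent), and $\typethree = \{\mathtt{0}\mathtt{1}^{k}\mathtt{0} \mid 1 \leq k \leq d-3\}$ (these are absent from $S\mathtt{0}$ because the only block of $\mathtt{1}$'s between two $\mathtt{0}$'s has length exactly $d-2$, while $\mathtt{0}\mathtt{1}^{k}$ for $k \leq d-3$ does occur and $\mathtt{1}^{k}\mathtt{0}$ occurs as well). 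Counting gives $0 + 2 + (d-3) = d-1$ added MAWs, hence $|\MAW(S)\bigtriangleup\MAW(S\mathtt{0})| = 1 + (d-1) = d$.

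The main obstacle, such as it is, lies in the tightness direction: one must be careful that the listed Type-3 strings $\mathtt{0}\mathtt{1}^{k}\mathtt{0}$ really are \emph{minimal} absent words for $S\mathtt{0}$ — that is, that both proper length-$(k+1)$ substrings $\mathtt{0}\mathtt{1}^{k}$ and $\mathtt{1}^{k}\mathtt{0}$ occur in $S\mathtt{0} = \mathtt{00}\mathtt{1}^{d-2}\mathtt{0}$ for every $1 \leq k \leq d-3$, and that no shorter absent word is a substring — and that the boundary cases $k = d-3$ and the small values of $d$ (where $\mathtt{1}^{d-2}$ may be short) behave as claimed; for $d = 3$ the set $\typethree$ is empty, $\typetwo$ still contributes $2$, and the bound $d = 3$ is met, consistent with Lemma~\ref{lem:at_least_a_zero}. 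Once the occurrence checks for $S\mathtt{0}$ are done, the two inequalities meet and the theorem follows. I would also remark that by symmetry (deleting the leftmost character) the same bound $\max\{3,d\}$ holds, matching the statement in~(\ref{eqn:binary_bounds}).
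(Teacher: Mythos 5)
Your proposal is correct and follows essentially the same route as the paper: the upper bound is obtained by combining Lemma~\ref{lem:extention_dec} (exactly one deleted MAW) with Lemma~\ref{lem:tight_binary_added_bound} (at most $d-1$ added MAWs), and tightness is witnessed by the same string $S = \mathtt{00}\mathtt{1}^{d-2}$ with the same type-by-type enumeration already carried out in the proof of Lemma~\ref{lem:tight_binary_added_bound}. Your extra occurrence checks for the Type-3 words and the boundary case $d=3$ are accurate and only make the argument more explicit.
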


\begin{proof}
The upper bound follows from Lemmas~\ref{lem:extention_dec} and~\ref{lem:tight_binary_added_bound},
and its tightness follows from
and our construction of the string $S'\mathtt{0}$ in the proof for Lemma~\ref{lem:tight_binary_added_bound}.
\end{proof}

The next corollary summarizes the results of this section.

\begin{corollary}
For any binary string $S$ over $\BSigma$ with $|S| = d$,
$|\MAW(S) \bigtriangleup \MAW(S\mathtt{0})| \leq \max\{3, d\}$,
and this upper bound is tight for any $d \geq 1$.
\end{corollary}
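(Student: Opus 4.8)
The plan is to assemble the corollary directly from the case analysis already carried out in this section, so the proof is essentially a bookkeeping argument over the possible values of $d = |S|$. First I would dispose of the small cases: for $d = 1$ and $d = 2$, Lemmas~\ref{lem:case_d=1} and~\ref{lem:case_d=2} give $|\MAW(S) \bigtriangleup \MAW(S\mathtt{0})| \leq 3 = \max\{3, d\}$ for every binary string $S$ of that length, and the strings exhibited inside those lemmas' proofs (e.g. $S = \mathtt{1}$ giving a symmetric difference of size $3$, and $S = \mathtt{11}$ giving size $3$) already witness tightness. For $d \geq 3$, Theorem~\ref{lem:bi_add_total} gives the upper bound $|\MAW(S) \bigtriangleup \MAW(S\mathtt{0})| \leq d = \max\{3, d\}$, and its tightness is witnessed by the string $S' = \mathtt{00}\mathtt{1}^{d-2}$ constructed in the proof of Lemma~\ref{lem:tight_binary_added_bound}, for which $|\MAW(S')\bigtriangleup\MAW(S'\mathtt{0})| = d$.

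The key steps, in order, are therefore: (1) split on $d \in \{1,2\}$ versus $d \geq 3$; (2) in the former range, invoke Lemmas~\ref{lem:case_d=1} and~\ref{lem:case_d=2} for the upper bound and read off a witness string from those proofs for the matching lower bound; (3) in the latter range, invoke Theorem~\ref{lem:bi_add_total} for both the upper bound and its tightness. One small subtlety worth a sentence is to confirm that $\max\{3,d\}$ is genuinely the right envelope: for $d \leq 3$ the bound is $3$ and is attained (for $d=3$, note $\mathtt{1}^3$ already gives symmetric difference $3$, consistent with Lemma~\ref{lem:at_least_a_zero}), while for $d \geq 3$ the bound is $d$ and is attained by $\mathtt{00}\mathtt{1}^{d-2}$; at $d = 3$ the two expressions coincide, so there is no gap.

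I do not expect any real obstacle here, since every ingredient is already proved; the only thing to be careful about is that tightness for the final corollary must hold \emph{for every} $d \geq 1$, so I would make sure to name an explicit extremal string for each $d$ rather than only for large $d$ — specifically $S = \mathtt{1}$ for $d = 1$, $S = \mathtt{11}$ for $d = 2$, and $S = \mathtt{00}\mathtt{1}^{d-2}$ for $d \geq 3$ — and to observe that each of these attains $\max\{3,d\}$. With those witnesses in hand the corollary follows immediately by combining the upper bounds from the cited lemmas and theorem.
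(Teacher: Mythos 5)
Your proposal is correct and matches the paper's own proof essentially verbatim: the same split into $d\in\{1,2\}$ versus $d\geq 3$, the same appeals to Lemmas~\ref{lem:case_d=1} and~\ref{lem:case_d=2} and Theorem~\ref{lem:bi_add_total} for the upper bounds, and the same tightness witnesses drawn from those lemmas' case analyses and from the string $S'=\mathtt{00}\mathtt{1}^{d-2}$ of Lemma~\ref{lem:tight_binary_added_bound}. Your extra care in naming an explicit extremal string for every $d$ is a slight presentational improvement, but the argument is the same.
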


\begin{proof}
  The upper bound follows from Lemmas~\ref{lem:case_d=1} and~\ref{lem:case_d=2}, Theorem~\ref{lem:bi_add_total},
  and its tightness follows from all possible binary cases shown in the proof for Lemmas~\ref{lem:case_d=1} and~\ref{lem:case_d=2} for $d = \{1, 2\}$,
and our construction of the string $S'\mathtt{0}$ in the proof for Lemma~\ref{lem:tight_binary_added_bound} for $d \geq 3$.
\end{proof}

\section{Conclusions and future work}

In this paper, we revisited the problem of
computing the minimal absent words (MAWs) for the sliding window model,
which was first considered by Crochemore et al.~\cite{CrochemoreHKMPR20}.

We investigated combinatorial properties of
MAWs for a sliding window of fixed length $d$ over a string of length $n$.
Our contributions are \emph{matching upper and lower bounds}
for the number of changes in the set of MAWs
for a sliding window when the window is shifted to the right by one character.
For the general case where the window $S$ and the extended window $S\alpha$
contain three or more distinct characters (i.e. $\sigma' \geq 3$),
the number of changes in the set of MAWs for $S$ and $S\alpha$ is
at most $d+\sigma'+1$ and this bound is tight.
For the case of binary alphabets (i.e. $\sigma' = 2$),
it is upper bounded by $\max\{3, d\}$ and this bound is also tight.

We also gave an asymptotically tight bound $O(\min\{d, \sigma\}n)$
for the number $\mathcal{S}(T, d)$ of total changes in the set of MAWs
for every sliding window of length $d$ over any string $T$ of length $n$,
where $\sigma$ is the alphabet size for the whole input string $T$.

The following open questions are intriguing:
\begin{itemize}
\item We showed that a matching lower bound $\mathcal{S}(T, d) \in \Omega(\min\{d, \sigma\}n)$ when $n-d \in \Omega(n)$. Is there a similar lower bound when $n-d \in o(n)$?

\item Crochemore et al.~\cite{CrochemoreHKMPR20}
  gave an online algorithm that maintains the set of MAWs
  for a sliding window of length $d$ in $O(\sigma n)$ time.
  Can one improve the running time to optimal $O(\min\{d, \sigma\}n)$?
\end{itemize}

\section*{Acknowledgments}
This work was supported by JSPS KAKENHI Grant Numbers JP20J11983 (TM), JP18K18002 (YN), JP21K17705 (YN), JP17H01697 (SI), JP16H02783 (HB), JP20H04141 (HB), JP18H04098 (MT), and by JST PRESTO Grant Number JPMJPR1922 (SI).

The authors thank anonymous referees for their fruitful comments and suggestions for improving the presentation of the paper.

\bibliographystyle{abbrv}
\bibliography{ref}

\end{document}